\newtheorem{thm}{THEOREM}[section]
\newtheorem{cor}[thm]{COROLLARY}
\newtheorem{defn}[thm]{DEFINITION}
\newtheorem{lemma}[thm]{LEMMA}
\newtheorem{prop}[thm]{PROPOSITION}
\newtheorem{remark}[thm]{REMARK}
\newcommand\myeq{\mathrel{\stackrel{\makebox[0pt]{\mbox{\normalfont\tiny def}}}{=}}}
\newcommand{\ds}{\displaystyle}
\newcommand{\F}{{\mathcal F}}
\newcommand{\cA}{{\mathcal A}}
\newcommand{\cG}{{\mathcal G}}
\newcommand{\cP}{{\mathcal P}}
\newcommand{\cQ}{{\mathcal Q}}
\newcommand{\cS}{{\mathcal S}}
\newcommand{\fG}{\mathfrak{G}}
\newcommand{\fM}{{\mathfrak{M}}}
\newcommand{\fX}{{\mathfrak{X}}}
\newcommand{\Homeo}{{\rm Homeo}}
\newcommand{\mR}{{\mathbb R}}
\newcommand{\mS}{{\mathbb S}}
\newcommand{\mT}{{\mathbb T}}
\newcommand{\mZ}{{\mathbb Z}}
\newcommand{\vp}{{\varphi}}
\newcommand{\whH}{\widehat{H}}
\newcommand{\wtH}{\widetilde{H}}
\newcommand{\wtpi}{{\widetilde{\pi}}}
\newcommand{\wtY}{\widetilde{Y}}
\newcommand{\psg}{{\rm pseudo}{\star}{\rm group}}
\newcommand{\wtM}{\widetilde{M}}
\newcommand{\wtf}{\widetilde{f}}
\newcommand\mor{\mathrel{\stackrel{\makebox[0pt]{\mbox{\normalfont\tiny t}}}{\sim}}}
\begin{document}

\title{Classifying matchbox manifolds}

\thanks{2010 {\it Mathematics Subject Classification}. Primary 52C23, 57R05, 54F15, 37B45; Secondary 53C12, 57N55 }

\author{Alex Clark}
\thanks{AC and OL supported in part by EPSRC grant EP/G006377/1}
\address{Alex Clark, Department of Mathematics, University of Leicester, University Road, Leicester LE1 7RH, United Kingdom}
\email{adc20@le.ac.uk}

\author{Steven Hurder}
\address{Steven Hurder, Department of Mathematics, University of Illinois at Chicago, 322 SEO (m/c 249), 851 S. Morgan Street, Chicago, IL 60607-7045}
\email{hurder@uic.edu}

\author{Olga Lukina}
\address{Olga Lukina, Department of Mathematics, University of Illinois at Chicago, 322 SEO (m/c 249), 851 S. Morgan Street, Chicago, IL 60607-7045}
\email{lukina@uic.edu}

 \thanks{Version date: October 21, 2017. This work is an extensive revision of the authors'  unpublished preprint \cite{CHL2013c}. }

\date{}


\begin{abstract}
Matchbox manifolds are foliated spaces with totally disconnected   transversals. Two matchbox manifolds which are homeomorphic have return equivalent dynamics, so that invariants of return  equivalence can be applied to distinguish non-homeomorphic matchbox manifolds.  In this work we study the problem of showing the converse implication: when does return equivalence imply homeomorphism?  For the class of weak solenoidal matchbox manifolds, we show that if the base manifolds satisfy a strong form  of the Borel Conjecture, then return equivalence for the dynamics of their foliations implies the total spaces are homeomorphic.    In particular, we show that two equicontinuous $\mT^n$--like matchbox manifolds of the same dimension   are homeomorphic if and only if their corresponding restricted pseudogroups are return equivalent.   At the same time, we show that these results cannot be extended to include the ``\emph{adic}-surfaces'', which are a class of weak solenoids fibering over a closed surface of genus 2.
\end{abstract}

\maketitle



\section{Introduction} \label{sec-intro}

 A matchbox manifold     is a compact, connected metrizable space $\fM$, equipped with a decomposition   into leaves of constant dimension, 
 so that the pair $(\fM, \F)$ is a foliated space  as defined in \cite{CandelConlon2000,MS2006},    for which the local transversals to the foliation  are totally disconnected. 
   In particular,    
the leaves of $\F$ are the path connected components of $\fM$.     A matchbox manifold with $2$-dimensional leaves is a lamination by surfaces in the sense of Ghys  \cite{Ghys1999} and Lyubich and Minsky \cite{LM1997}.  The  ``solenoidal spaces'' of  Sullivan in \cite{Sullivan2014,Verjovsky2014} are examples of matchbox manifolds.  The dynamical and topological properties of matchbox manifolds have been studied in a series of works by the authors \cite{ClarkHurder2013,CHL2014,CHL2017a}.

  Matchbox manifolds arise naturally    as   exceptional minimal sets for foliations of compact manifolds, for example see \cite{Hurder2008,Hurder2014};    as the tiling spaces associated to  repetitive, aperiodic tilings of Euclidean space $\mR^n$ which have      finite local complexity,   for example see \cite{AP1998,Sadun2003,Sadun2008};  and they appear naturally in the study of group representation theory and index theory for leafwise elliptic operators for foliations, as discussed in the books \cite{CandelConlon2000,MS2006}.  
The classification problem for matchbox manifolds asks for invariants which distinguish their homeomorphism types.       
      For example, in  the study of aperiodic tilings and their invariants,    the cohomology and 
  K-Theory groups of their associated   tiling spaces have been calculated in many instances, as for example   in \cite{AP1998,BS2007,BargeSadun2011,ClarkHunton2016,FHK2002}.

A matchbox manifold $(\fM, \F)$ is  also  a type of dynamical system, as discussed   in \cite{Hurder2014}  for example. 
A homeomorphism between matchbox manifolds preserves the leaves, as they   are the path connected components of $\fM$, and thus  many dynamical properties of $\F$ are invariants of the homeomorphism class of $\fM$. For example, the foliation $\F$ is said to be \emph{minimal} if each leaf $L \subset \fM$ is dense, and this property is clearly a homeomorphism invariant. For a   clopen  transversal $W$ of $\F$,  the  dynamical properties  of a minimal foliation $\F$  are determined by     the pseudogroup $\cG_W$ of local holonomy maps acting on the transversal  $W$. 
 \emph{Return equivalence} of pseudogroup actions on Cantor spaces   is the analog   of the   notion of \emph{Morita equivalence} for  groupoids associated to smooth foliations of compact manifolds, as discussed for example by Haefliger in \cite{Haefliger1984,Haefliger2002a}.  One then has the following result, whose proof follows along the same method as for the case of smooth foliations:

    \begin{thm} \label{thm-main0}
Let $\fM_1$ and $\fM_2$ be minimal matchbox manifolds. Suppose that there exists a homeomorphism $h \colon \fM_1 \to \fM_2$, then the holonomy pseudogroup actions associated to $\fM_1$ and $\fM_2$  are return equivalent.
\end{thm}

    Now consider $\fM_1$ and $\fM_2$ which are minimal matchbox manifolds whose holonomy pseudogroups are return equivalent.  That is, assume    there exists       clopen transversals $W_1$ to $\fM_1$ and  $W_2$ to $\fM_2$, and a homeomorphism $h \colon W_1 \to W_2$ which conjugates the restricted holonomy actions.  
   It is natural to ask for assumptions   on $\fM_1$ and $\fM_2$ which are sufficient  to guarantee that the transverse map  $h$ extends to a homeomorphism $H \colon \fM_1 \to \fM_2$.
  In the case of $1$-dimensional flows,  there is    the following result of   Aarts and Oversteegen \cite[Theorem 17]{AO1995}:
\begin{thm}\label{thm-one-dim}
Two orientable,  minimal, $1$--dimensional matchbox manifolds are homeomorphic if and only if they are return equivalent.
\end{thm}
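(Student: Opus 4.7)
The plan is to prove the two directions separately, with the converse being the substantive content. The forward direction is essentially formal: any homeomorphism $h : \fM_1 \to \fM_2$ must send leaves to leaves, since leaves are the path components of the underlying space. Choosing complete Cantor transversals $T_i \subset \fM_i$ and modifying $h$ slightly along flow lines so that $h(T_1) \subset T_2$, one obtains a homeomorphism between clopen subsets of the transversals that conjugates the restricted holonomy pseudogroups; by definition, this is a return equivalence.

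For the nontrivial converse, I would begin by exploiting orientability and $1$-dimensionality to put each $\fM_i$ into a standard form. An orientable $1$-dimensional minimal matchbox manifold admits a globally defined, leafwise continuous flow $\Phi^i_t$ (trivialize the oriented tangent line bundle and reparametrize). One then picks a complete clopen transversal $T_i \subset \fM_i$ with first-return map $\phi_i : T_i \to T_i$ and positive continuous return-time function $\tau_i : T_i \to \mR_{>0}$; this realizes each $\fM_i$ as the suspension (flow under the ceiling $\tau_i$) of the minimal Cantor system $(T_i, \phi_i)$.

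Given a return equivalence, one extracts an orbit equivalence $\alpha : T_1 \to T_2$ intertwining the $\mZ$-orbit partitions of $\phi_1$ and $\phi_2$. The goal is to promote $\alpha$ to a homeomorphism $H : \fM_1 \to \fM_2$. Every point of $\fM_i$ is uniquely of the form $\Phi^i_t(x)$ with $x \in T_i$ and $0 \le t < \tau_i(x)$, so the natural recipe is
\[
H\!\left(\Phi^1_t(x)\right) \;=\; \Phi^2_{s(x,t)}\!\bigl(\alpha(x)\bigr),
\]
where $s(x,t)$ is a reparametrization chosen to match sequences of return times along the orbits of $\phi_1$ and $\phi_2$.

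The main obstacle, and the real technical heart of the proof, is continuity of $H$: the orbit equivalence cocycle $n : T_1 \to \mZ$ defined by $\alpha \circ \phi_1(x) = \phi_2^{n(x)} \circ \alpha(x)$ is only Borel measurable in general, and if $n$ jumps in a neighborhood of some $x$ then the naive definition of $H$ will be discontinuous on a whole arc of a leaf. Overcoming this requires strengthening the return equivalence to a \emph{flow equivalence}: using orientability and the Cantor structure of $T_i$, one refines the transversals by finer and finer clopen partitions on which $n$ is locally constant, and then adjusts $\alpha$ and the reparametrization $s(x,t)$ so that the return times on matching clopen pieces agree up to uniformly small continuous error. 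Standard arguments (compactness of $\fM_i$, minimality to rule out degenerate identifications, and a Cantor-set bookkeeping for the telescoping adjustments) then show $H$ is a well-defined continuous bijection between compact Hausdorff spaces, hence a homeomorphism.
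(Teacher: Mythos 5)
First, a point of reference: the paper does not prove Theorem~\ref{thm-one-dim} itself; it is quoted from Fokkink's thesis and from Aarts--Oversteegen, so the comparison below is with that classical argument (and with the paper's general Theorem~\ref{thm-topinv} for the easy direction). Your forward direction is fine in spirit and matches Theorem~\ref{thm-topinv}, except that one does not ``modify $h$ along flow lines'' so that $h(T_1)\subset T_2$; one composes $h$ with the plaque projections of a sufficiently fine regular covering, as in the proof of Theorem~\ref{thm-topinv}. Your reduction of each orientable minimal $1$--dimensional matchbox manifold to a suspension of a minimal Cantor system $(T_i,\phi_i)$ under a continuous ceiling is also the standard first step.

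The genuine gap is in the middle of your converse: you extract from the return equivalence only ``an orbit equivalence $\alpha\colon T_1\to T_2$ intertwining the $\mZ$--orbit partitions,'' and then try to repair the continuity of the cocycle $n$ afterwards. This weakening is fatal, not technical. Orbit equivalence of minimal Cantor systems is strictly weaker than flow equivalence (by Giordano--Putnam--Skau it is governed by $K^0$ modulo infinitesimals, so there are orbit equivalent minimal Cantor systems whose suspensions have non-isomorphic first \v{C}ech cohomology and hence are not homeomorphic); consequently no amount of refining clopen partitions can make a merely Borel cocycle continuous in general, and a proof that starts from orbit equivalence cannot succeed. The correct use of the hypothesis is that return equivalence, as in Definition~\ref{def-return}, hands you a homeomorphism $\phi\colon U_1\to U_2$ of clopen transversals conjugating the \emph{restricted pseudogroups} $\cG_{U_1}\to\cG_{U_2}$. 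In dimension one, every element of $\cG_{U_i}$ is holonomy along a leafwise path between points of the transversal, hence is locally a (locally constant) power of the first-return map $R_i$ to $U_i$; therefore the cocycle $n$ with $\phi(R_1x)=R_2^{\,n(x)}(\phi(x))$ is continuous (locally constant) \emph{from the outset}, with a continuous cocycle in the other direction as well. The remaining content --- that such a continuous orbit equivalence with bounded cocycles yields flow equivalence of the suspensions, hence a homeomorphism --- is exactly the Parry--Sullivan/Boyle step that Fokkink and Aarts--Oversteegen carry out (equivalently: one passes to smaller clopen sets on which $\phi$ conjugates the induced first-return systems, and observes that suspensions of conjugate Cantor systems under any positive continuous ceilings are homeomorphic). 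Your sketch gestures at this but locates the difficulty in the wrong place and discards the very structure (pseudogroup conjugacy, not orbit equivalence) that makes the argument work.
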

  Since any non--orientable, minimal, matchbox manifold admits an orientable double cover, this implies that the local dynamics   determines the global topology in dimension one. For a matchbox manifold with leaves of dimension greater than one, the question whether there exists a converse to Theorem~\ref{thm-main0} is much more subtle.  Julien and Sadun studied  in \cite{JS2015} the homeomorphism classification for the tiling spaces associated to aperiodic tilings of the Euclidean space $\mR^n$, and the relation to return equivalence for the   associated pseudogroups. 
  
 In this work, we consider  the converse   to   Theorem~\ref{thm-main0} when $\fM$ is   homeomorphic to a weak solenoid.     
 A \emph{weak solenoid} is defined as the inverse limit of an infinite sequence of   proper finite covering maps of a closed compact manifold, called the base of the solenoid. The properties of weak solenoids are recalled in Section~\ref{sec-solenoids}. In particular,  a weak solenoid is homeomorphic to the suspension of a minimal equicontinuous action of a finitely generated group on a Cantor set,  called the global monodromy for the solenoid.   In     Section~\ref{sec-bundles}, the problem of showing that a pair of weak solenoids which are return equivalent are also homeomorphic,   is reduced to showing that they have presentations with homeomorphic base manifolds  and conjugate     global holonomy actions. 
     

There  is a special class of solenoidal spaces where the converse to Theorem~\ref{thm-main0} can be proved without further assumptions.  
We say that $\cS_\cP$ is a \emph{toroidal solenoid} if it is  defined by a presentation $\cP$ as in \eqref{eq-defweaksol}, where each of the manifolds $M_{\ell}$ is homeomorphic to the $n$-torus $\mT^n$. The toroidal solenoids arise as the minimal sets for smooth foliations, as shown in \cite{ClarkHurder2011a}. 
For $n \geq 2$, we have the following generalization of Theorem~\ref{thm-main0}.

\begin{thm}\label{thm-main2}
Suppose that  $\fM_1$ and $\fM_2$ are homeomorphic to toroidal solenoids of the same dimension $n$.
 Then $\fM_1$ and $\fM_2$ are   homeomorphic if and only if the holonomy pseudogroup actions associated to $\fM_1$ and $\fM_2$  are return equivalent.
\end{thm}

For the  toroidal solenoids  with base dimension  $n = 1$,   the homeomorphism type of $\cS_{\cP}$ is determined by the asymptotic class of a sequence of integers $\{m_{\ell} \mid \ell > 0\}$, the covering indices, as shown by  Bing \cite{Bing1960} and  McCord \cite[Section~2]{McCord1965}, and see also Block and Keesling \cite[Corollary~2.6]{BlockKeesling2004}.
Moreover, Aarts and Fokkink showed in \cite[Section~3]{AartsFokkink1991} that the the asymptotic class of the sequence of covering indices $\{m_{\ell} \mid \ell > 0\}$   is determined by the  return equivalence  class of the flow.  This result will be discussed further in Section~\ref{sec-examples} below.

For the  toroidal solenoids  with base dimension  $n \geq 2$,  
the results of Giordano, Putnam and Skau in \cite{GPS2017},  and   Cortez and Medynets in   \cite{CortezMedynets2016}, provide complete invariants of the   return equivalence class of minimal equicontinuous free $\mZ^n$ actions on Cantor sets. Their invariants  combined with the conclusion of Theorem~\ref{thm-main2} yields a  classification of toroidal solenoids up to homeomorphism.

     In Section~\ref{sec-examples} below, we introduce the \emph{adic}-surfaces, which   are     $2$-dimensional weak solenoids, and give examples   of return equivalent   \emph{adic}-surfaces which are   non-homeomorphic.   For non-toroidal weak solenoids of  dimension greater than one,  it is necessary to impose  geometric conditions which rule out the examples such as given in Section~\ref{sec-examples},   in   order to obtain a converse to Theorem~\ref{thm-main0}. 

The first condition we impose is that there exists a leaf for the foliation  which is simply connected.  
Secondly, we    impose topological restrictions on the base manifolds, in order that their homeomorphism type of their proper coverings are  determined by their fundamental groups.  

Recall that a finite $CW$-complex  $Y$ is \emph{aspherical} if it is connected  and its universal covering space  is  contractible.    Let   $\cA$ denote the collection    of $CW$-complexes which are aspherical.
 Also recall  that the \emph{Borel Conjecture} is that if  $Y_1$ and $Y_2$ are homotopy equivalent, aspherical closed manifolds, then a homotopy equivalence between $Y_1$ and $Y_2$ is homotopic to a  homeomorphism between $Y_1$ and $Y_2$.    The Borel Conjecture has been proven for many classes of aspherical manifolds:
\begin{itemize}
\item the torus $\mT^n$ for all $n \geq 1$,
\item all  infra-nilmanifolds,
\item   closed Riemannian manifolds $Y$ with negative sectional curvatures,
\item   closed Riemannian manifolds $Y$ of dimension $n \ne 3,4$ with non-positive sectional curvatures. 
\end{itemize}
A compact connected manifold $Y$ is an \emph{infra-nilmanifold} if its universal cover $\wtY$ is contractible, and the fundamental group of $M$ has a nilpotent subgroup with finite index. 

 The above  list is not exhaustive. The history and current status of the Borel Conjecture is discussed in the surveys of Davis \cite{Davis2012} and L\"{u}ck  \cite{Luck2012}. We introduce the notion of a \emph{strongly Borel} manifold.
\begin{defn} \label{def-borel}
A collection $\cA_B$  of closed manifolds is called \emph{Borel} if it satisfies the   conditions
\begin{itemize}
\item[1)] Each $Y \in \cA_B$ is aspherical,
\item[2)] Any closed manifold $X$ homotopy equivalent to some $Y \in\cA_B$ is homeomorphic to $Y$, and
\item[3)] If $Y \in \cA_B,$ then any finite covering space of $Y $ is also in $\cA_B.$
\end{itemize}
We say that a closed manifold $Y$ is \emph{strongly Borel} if  the collection $\cA_Y \equiv \langle Y \rangle$ of all finite covers of $Y$ forms a Borel collection.
\end{defn}
Each class of manifolds in the above list is strongly Borel. Here is our second main result:

 \begin{thm}\label{thm-main3}
 Let $\cS_{\cP}$ and $\cS_{\cQ}$ be weak solenoids, for which the base manifolds $M_0$ of the presentation $\cP$ and  $N_0$ of the presentation $\cQ$ are both   strongly Borel closed manifolds of the same dimension. 
Assume that the foliations on  $\cS_{\cP}$ and $\cS_{\cQ}$  each contain      a   leaf which is    simply connected.
 Then $\cS_{\cP}$ and $\cS_{\cQ}$  are homeomorphic if and only if the holonomy pseudogroup actions associated to $\cS_{\cP}$ and $\cS_{\cQ}$  are return equivalent.
\end{thm}
 
The requirement that there exists a simply connected leaf implies that the global holonomy maps associated to each of these foliations are injective maps. This conclusion yields a connection between return equivalence for the foliations of $\cS_{\cP}$ and $\cS_{\cQ}$ and the homotopy types of the  approximating manifolds in the presentations $\cP$ and $\cQ$. This requirement need not be imposed for the case of $Y = \mT^n$ in Theorem~\ref{thm-main2}, due to the     algebraic    properties of $\mZ^n$. We also note that the injectivity of the global holonomy maps    implies that the fundamental groups $\pi_1(M_0, x_0)$ and $\pi_1(N_0, y_0)$ are residually finite.

    A key aspect of the hypotheses in   Theorems~\ref{thm-main2} and \ref{thm-main3}, is that the domains of the return equivalence can be taken to have arbitrarily small diameter. Consequently,     invariants of return equivalence developed to distinguish actions should have an asymptotic nature, in that they are defined for arbitrarily small transversals. 
 
 A homeomorphism between matchbox manifolds induces a quasi-isometry between the leaves of the respective foliations, equipped with the induced metrics. It  is a classical result of  Plante \cite{Plante1975} that the quasi-isometry class of a leaf is determined by its intersection with any transversal, and thus provides a general invariant of asymptotic return equivalence.  
 For example, bounds on the growth rates of the leaves  are return equivalence  invariants. This observation was used  in the work \cite{DHL2016b}  to give   growth restrictions on the leaves which imply that the weak solenoid is a homogeneous continuum.

The \emph{asymptotic discriminant}  for an equicontinuous minimal Cantor action   was defined in \cite{HL2017a}, and is an invariant of the return equivalence class of the action, essentially by its definition. It thus provides an invariant of the homeomorphism class of the weak solenoid. Using this asymptotic invariant, the constructions of examples of wild solenoids   in \cite[Section~9]{HL2017a} were  shown to yield uncountable collections of non-homeomorphic weak solenoids, all with the same compact base manifold whose fundamental group is a higher rank lattice, and in particular is highly non-abelian.

\section{Standard forms for weak solenoids}\label{sec-solenoids}

Weak solenoids were first introduced by McCord \cite{McCord1965},   and   we recall  here the definitions and some of their properties  as developed by    Schori \cite{Schori1966}, Rogers and Tollefson \cite{RT1971b,RT1972} and Fokkink and Oversteegen \cite{FO2002}. We then recall the ``odometer representation'' of a weak solenoid as the suspension of a (non-abelian) group odometer (or subodometer) action.

  A  \emph{presentation} (for a weak solenoid)   is a collection 
\begin{equation}\label{eq-defweaksol}
\cP = \{ p_{\ell+1} \colon M_{\ell+1} \to M_{\ell} \mid \ell \geq 0\} \ ,
\end{equation}
    where each $M_{\ell}$ is a connected compact manifold   of dimension $n$, and each  \emph{bonding} map $p_{\ell +1}$  is a proper covering map of finite index.
 The weak solenoid   $\cS_{\cP}$   is the inverse limit associated to the presentation $\cP$:  
\begin{equation}\label{eq-presentationinvlim}
\cS_{\cP} \equiv \lim_{\longleftarrow} ~ \{ p_{\ell +1} \colon M_{\ell +1} \to M_{\ell}\} ~ \subset \prod_{\ell \geq 0} ~ M_{\ell} ~ .
\end{equation}
 By definition, for a sequence $\{x_{\ell} \in M_{\ell} \mid \ell \geq 0\}$, we have 
$$
x = (x_{\ell}) \equiv (x_0, x_1, \ldots ) \in \cS_{\cP}   ~ \Longleftrightarrow  ~ p_{\ell}(x_{\ell}) =  x_{\ell-1} ~ {\rm for ~ all} ~ \ell \geq 1 ~. 
$$
The set $\cS_{\cP}$ is given  the relative  (or Tychonoff) topology induced from the product topology. Then $\cS_{\cP}$ is   compact and connected.
  McCord showed in \cite{McCord1965} that  the space $\cS_{\cP}$ has a local product structure, and moreover we have:

\begin{prop}\label{prop-solenoidsMM}
Let  $\cP$ be a presentation with base space $M_0$ of dimension $n \geq 0$, and let  $\cS_{\cP}$ be the associated weak solenoid. Then   $\cS_{\cP}$ is  a matchbox manifold of dimension $n$, and the leaves of the foliation   $\F_{\cS}$ are the   path-connected components of $\cS_{\cP}$. 
\end{prop}

Associated to a presentation $\cP$  is a sequence of proper surjective maps 
\begin{equation}\label{eq-coverings}
q_{\ell} = p_{1} \circ \cdots \circ p_{\ell -1} \circ p_{\ell} \colon M_{\ell} \to M_0 ~ .
\end{equation}
For each $\ell > 1$, projection onto the $\ell$-th factor in the product $\ds \prod_{\ell \geq 0} ~ M_{\ell}$ in \eqref{eq-presentationinvlim} yields a 
  fibration map denoted by $\Pi_{\ell} \colon \cS_{\cP}  \to M_{\ell}$, for which 
 $\Pi_0 = q_{\ell} \circ \Pi_{\ell}  \colon \cS_{\cP} \to M_0$.

Fix a choice of a basepoint $x_0 \in M_0$ and let   $\fX_0 = \Pi_0^{-1}(x_0)$ be the fiber over $x_0$. Then  $\fX_0$ is a Cantor set by the assumption  that the fibers of each map $p_{\ell}$ have cardinality at least $2$.  

Choose a basepoint $x \in \fX_0$, and for   $\ell \geq 1$, define basepoints  $x_{\ell} = \Pi_{\ell}(x) \in M_{\ell}$. Then let 
\begin{equation}\label{eq-imahes}
G^x_{\ell} = {\rm image}\left\{  (q_{\ell} )_{\#} \colon \pi_1(M_{\ell}, x_{\ell}) \longrightarrow   G_0\right\}  
\end{equation}
  denote  the image of the induced map $(q_{\ell} )_{\#} $ on fundamental groups. Associated to the presentation $\cP$ and basepoint $x \in \fX_0$ we thus obtain a descending chain of subgroups of finite index
  \begin{equation}\label{eq-descendingchain}
\cG^x \equiv  \{ G^x_{\ell} \}_{\ell \geq 0} = \left\{G_0 = G^x_0 \supset G^x_{1} \supset G^x_{2} \supset \cdots \supset G^x_{\ell} \supset \cdots \right\} \ .
\end{equation}
Each quotient  $X^x_{\ell} = G_0/G^x_{\ell}$ is   a   finite set equipped with a left $G_0$-action, and the natural surjections $X^x_{\ell +1} \to X^x_{\ell}$   commute with the action of $G_0$.  Thus, the inverse limit 
\begin{equation}\label{eq-Galoisfiber}
X^x_{\infty} = \lim_{\longleftarrow} ~ \{ p_{\ell +1} \colon X^x_{\ell +1} \to X^x_{\ell}\} ~ \subset \prod_{\ell \geq 0} ~ X^x_{\ell}  
\end{equation}
is a $G_0$-space. Give $X^x_{\infty}$ the relative topology induced from   the product (Tychonoff) topology on the   space $\ds \prod_{\ell \geq 0} ~ X^x_{\ell}$, so that $\ds X^x_{\infty}$   is   a totally disconnected perfect compact set, so is   a Cantor space. 

Note that the subgroups $G^x_{\ell}$ in \eqref{eq-imahes} $\ds X^x_{\infty}$ are not assumed be normal in $G_0$, and thus $X^x_{\infty}$ is not a profinite group in general, without  some form of  ``normality'' assumptions on the subgroups in the chain   $\cG^x$. The question of what assumptions are necessary for the limit $\ds X^x_{\infty}$ to be a profinite group was first raised in the work \cite{RT1971b} by Rogers and Tollefson, and further analyzed by Fokkink and Oversteegen  in \cite{FO2002}. The  subsequent   work by Dyer, Hurder and Lukina in \cite{DHL2016a} characterized the necessary normality condition in terms of the discriminant invariant of the chain $G^x_{\ell}$.

A sequence $(g_{\ell})  \subset G_0$ such that $g_{\ell} G^x_{\ell} = g_{\ell+1} G^x_{\ell}$ for all $\ell \geq 0$ determines a point $(g_{\ell} G^x_{\ell}) \in X^x_{\infty}$. 
Let $e \in G_0$ denote the identity element, then the sequence $e_{0} = (e G^x_{\ell})$ is the \emph{standard basepoint} of $X^x_{\infty}$.
The action  $\Phi_x \colon G_0 \times X^x_{\infty} \to X^x_{\infty}$ is given by coordinate-wise multiplication, $\Phi_x(g)(g_{\ell} G^x_{\ell}) = (g g_{\ell} G^x_{\ell})$.

 We then have the standard observation:

 \begin{lemma}\label{lem-denseaction}
 $\Phi_x \colon G_0 \times X^x_{\infty} \to X^x_{\infty}$ defines an equicontinuous     Cantor minimal system  $(X^x_{\infty} , G_0 , \Phi_x)$.  
\end{lemma}  
 
 When $X^x_{\infty}$ has the structure of a profinite group, the    action  $\Phi_x \colon G_0 \times X^x_{\infty} \to X^x_{\infty}$ is called an \emph{odometer} by Cortez and Petite in \cite{CortezPetite2008}, and when $X^x_{\infty}$ is simply a Cantor space they call the action a    \emph{subodometer}. If the group $G_0$ is abelian, then $X^x_{\infty}$ is  a profinite abelian group, and more generally 
 if the chain   \eqref{eq-descendingchain}  consists of normal subgroups of $G_0$, then $X^x_{\infty}$ is a profinite   group. 
 For simplicity, we  will   call all of these equicontinuous minimal actions by the   nomenclature ``odometers''.

 Recall that $\Pi_{\ell} \colon \cS_{\cP}  \to M_{\ell}$ is a fibration for each $\ell \geq 0$, and so   the set $\fX^x_{\ell} = \Pi_{\ell}^{-1}(x_{\ell})$ is a clopen subset of $\fX_0$.
 From the relation   $q_{\ell+1}  \circ \Pi_{\ell+1} = \Pi_{\ell}$ we have that $\fX^x_{\ell +1} \subset \fX^x_{\ell}$ so we obtain a nested chain of clopen subsets 
 $\{\fX^x_{\ell +1} \subset \fX^x_{\ell} \mid \ell \geq 0\}$. Moreover, by the definition of the topology on the inverse limit $\cS_{\cP}$,   the intersection of these sets is the chosen basepoint $x \in \fX_0$.
 
  The global monodromy  action  $\Phi_{\F} \colon G_0 \times \fX_0 \to \fX_0$   is then defined as follows. Given a point $y \in \fX^x$, let $L_y \subset \cS_{\cP}$ be the leaf containing $y$.  
The restriction $\Pi_0 \colon L_y  \to M_0$ is a covering map, so  given a closed path $\sigma \colon  [0,1] \to M_0$ with basepoint $x_0$,   there is a unique leafwise path $\sigma_y$ in $L_y$ with initial point $y$ and terminal point   $\sigma_y(1) \in \cS_{\cP}$. The terminal point $\sigma_y(1)$ depends only on the basepoint-preserving homotopy class of the path $\sigma$.
 Given $g \in G_0$ and $y \in \fX_0$ choose a closed path $\sigma^g$ in $M_0$ representing $g$, choose a lift $\sigma^g_y$ as above, then set $\Phi_{\F}(g)(y) =  \sigma^g_y(1)$. This yields a well-defined group action of $G_0$ on the Cantor space $\fX_0$.

The subgroup   $G^x_{\ell} \subset G_0 = \pi_1(M_0 , x_0)$ is represented by closed paths in $M_0$ with basepoint $x_0$ and which admit a   lift for the covering  $q_{\ell} \colon M_{\ell} \to M_0$ to a closed path with endpoint $x_{\ell}$. It follows that for the leaf $L_x \subset \cS_{\cP}$ containing $x \in \fX_0$, we can also characterize $G^x_{\ell}$ as the subgroup represented by those closed paths which admit a lift  to $L_x$ which start at $x$, and terminate at a point in $L_x \cap \fX^x_{\ell}$. 
Thus  we have
  \begin{equation}\label{eq-cosets}
G^x_{\ell} = \{ g \in G_0 \mid \Phi_{\F}(g) ( \fX^x_{\ell} ) = \fX^x_{\ell}\} \ .
\end{equation}
 That is, the action $\Phi_{\F}$ of $g$ fixes the set $\fX_\ell^x$, possibly permuting points within this subset.

Let $g \in G_0$ represent the coset   $[g]_{\ell} \in G_0/G_\ell$. It follows from (7) that the image $\fX^{x,g}_\ell = \Phi_{\F}(g)(\fX_\ell^x)$ of $\fX^x_\ell$ under the action of $g$ either coincides with $\fX^x_\ell$ or it is disjoint from $\fX^x_\ell$. Thus the collection $\{\fX_\ell^{x,g}\}_{g \in G}$ is a finite collection of disjoint clopen sets which cover $\fX_0$.
Moreover, for all $\ell' > \ell > 0$, the collection of clopen sets 
 $\ds \{ \fX_{\ell'}^{x,g}   \mid [g]_{\ell'} = g G^x_{\ell'} \in   G^x_{\ell}/G^x_{\ell'} \}$ is a finite partition of $X^x_{\ell}$.
 
Given $y \in \fX_0$ there exists  a unique  $(g_\ell G_\ell) \in X^x_\infty$   so that 
 $\ds y = \bigcap_{\ell \geq 0} \ \fX_{\ell}^{x,g_{\ell}}$. Define $\sigma_x \colon \fX_0 \to X^x_{\infty}$ by $\sigma_x(y) = (g_\ell G_\ell)$.
 The map $\sigma_x$ is surjective, bijective and continuous, hence a homeomorphism. 
 Define 
 $\tau_x = \sigma_x^{-1} \colon X^x_{\infty} \to  \fX_0$ so that $\tau_x(e_0) = x$.  The map $\tau_x$ can be viewed as ``coordinates'' on $\fX_0$ centered at the chosen basepoint $x \in \fX_0$. 
  It follows from the construction of $\tau_x$ that it commutes with the left $G_0$-actions $\Phi_{\F}$   on $\fX_0$ and $\Phi_x$  on $X^x_{\infty}$.

The group chain \eqref{eq-descendingchain} and the homeomorphism $\tau_x$ depend on the choice of a point $x \in \fX_0$. For a different basepoint $y \in \fX_0$ in the fibre over $x_0$, for each $\ell > 0$ there exists $g_{\ell} \in G_0$ such that $y \in \fX^y_{\ell} \equiv \Phi_{\F}(g_{\ell}) ( \fX^x_{\ell} )$, 
and hence $\ds y = \bigcap_{\ell \geq 0} \ \fX_{\ell}^{y}$. 
Then for each $\ell > 0$, define $G_{\ell}^y = g_{\ell} G_{\ell}^x g_{\ell}^{-1}$ which consists of elements of $G_0$ that leave the set $\fX^y_{\ell}$ invariant. 
Let $\cG^y = \{G^y_{\ell} \mid \ell \geq 0 \}$ be the resulting group chain, with corresponding inverse limit space $X^y_{\infty}$.
Then the map   $\tau_y \colon X^y_{\infty} \to  \fX_0$  gives  coordinates  on $\fX_0$ centered at the chosen basepoint $y \in \fX_0$.

    The composition $\tau_y \circ \tau_x^{-1} \colon X_{\infty}^x \to X_{\infty}^y$ gives a topological conjugacy between the minimal Cantor actions  $(X_{\infty}^x , G_0 , \Phi_x)$ and $(X_{\infty}^y , G_0 , \Phi_y)$, and the composition  $\tau_y \circ \tau_x^{-1}$ can be viewed as a ``change of coordinates''. Properties of the minimal Cantor action 
$(X_{\infty}^x , G_0 , \Phi_x)$ which are independent of the choice of these coordinates are thus properties of the topological type of $\cS_{\cP}$. 
 
  The group chains $\cG^y$ and $\cG^x$  are said to be \emph{conjugate chains}. This notion forms an equivalence relation on group chains  which was introduced by Fokkink and Oversteegen \cite{FO2002}. The properties of this equivalence relation were studied in depth in \cite{DHL2016a,DHL2016c}.

The map  $\tau_x \colon X^x_{\infty}  \to  \fX_0$ is used to give   the ``odometer model'' for the solenoid $\cS_{\cP}$.
Let $\wtM_0$ denote the universal covering of the compact manifold $M_0$, and let $(X^x_{\infty} , G_0 , \Phi_x)$ be the minimal Cantor system associated to the presentation $\cP$ and the choice of a basepoint $x \in \fX_0$. Associated to the left action $\Phi_x$ of $G_0$ on $X^x_{\infty}$ is a suspension space 
\begin{equation}\label{eq-suspensionfols}
\fM_{\Phi} = \wtM_0 \times X^x_{\infty} / (z \cdot g^{-1}, y) \sim (z , \Phi_x(g)(y)) \quad {\rm for }~ z \in \wtM_0 , ~ g \in G_0, ~ y \in X^x_{\infty}
\end{equation}
which is a minimal matchbox manifold.  This construction is a generalization of a standard technique for constructing smooth foliations, as discussed in \cite{CN1985,CandelConlon2000} for example. 

Moreover, the suspension space $\fM_{\Phi}$ of a minimal equicontinuous action $\vp$ has an inverse limit  presentation,  where all of the bonding maps between the coverings $M_{\ell} \to M_0$ are derived from the universal covering map $\wtpi \colon \wtM_0 \to M_0$. The  following result  is given in  \cite{ClarkHurder2013}, and its proof is a consequence of the lifting property for maps between coverings:
\begin{thm}   \label{thm-weaksuspensions}
Let   $\cS_{\cP}$ be a weak solenoid with base space $M_0$. Then the suspension of the map $\tau_x$ yields a foliated homeomorphism $\tau_x^* \colon \fM_{\Phi} \to \cS_{\cP}$.
\end{thm}
  
  \begin{cor}\label{cor-weaksuspensions}
The homeomorphism type of a weak solenoid    $\cS_{\cP}$ is completely determined by the base manifold $M_0$ and the associated minimal Cantor system  
$(X_{\infty}^x , G_0 , \Phi_x)$.
\end{cor}

We conclude this discussion of some basic geometry  of weak solenoids, by recalling some properties of the holonomy groups of the foliations of weak solenoids.
First,   recall a basic result of Epstein, Millet and Tischler \cite{EMT1977}.

\begin{thm} \label{thm-emt}
Let $(\fX , G, \Phi)$ be a given action, and suppose that $\fX$ is a Baire space. 
Then the  union of all $x \in \fX$ such that the germinal holonomy group  ${\rm Germ}(\Phi , x)$ at $x$ is   trivial   forms a  $G_{\delta}$ subset of $\fX$. 
\end{thm}
The main result in \cite{EMT1977} is stated in terms of the germinal holonomy groups of leaves of a foliation, but an inspection of the proof  shows that it  applies directly to a general action $(\fX , G, \Phi)$.

 We conclude   by introducing  the following important notion:
 \begin{defn}\label{def-kernel}
The \emph{kernel} of the group chain  $\cG^x = \{G^x_{\ell}\}_{\ell \geq 0}$ is the subgroup  $\ds K(\cG^x) = \bigcap_{\ell \geq 0} ~ G^x_{\ell}$.
\end{defn}
  
For a weak solenoid $\cS_{\cP}$ with choice   of a basepoint $x_0 \in M_0$ and  fiber  $\fX_0 = \Pi_0^{-1}(x_0)$, 
  the kernel subgroup $K(\cG^x) \subset G_0$  may depend on the choice of the basepoint $x \in \fX_0$. 
  The dependence   of $K(\cG^x)$ on $x$ is a natural aspect  of the dynamics of the foliation $\F_{\cS}$ on $\cS_{\cP}$, when 
  $K(\cG^x)$ is interpreted in terms of the topology of the leaves of $\F_{\cS}$ as follows.
  
The map $\tau_x^* \colon \fM_{\Phi} \to \cS_{\cP}$ of Theorem~\ref{thm-weaksuspensions} sends the 
  quotient space  $\wtM/K(\cG^x)$    to the leaf $L_x \subset \cS_{\cP}$ through $x \in \fX_0$ in $\cS_{\cP}$, and so $K(\cG^x)$  is naturally identified with the fundamental group $\pi_1(L_x , x)$.
 The global holonomy homomorphism $\Phi_{\F,x} \colon \pi_1(L_x , x) \to Homeo(\fX_0 , x)$    of the leaf $L_x$ in the suspension foliation    $\F_{\cS}$ of $\cS_{\cP}$ is  then conjugate to the left action,  $\Phi_0 \colon K(\cG^x) \to Homeo(X^x_{\infty} , e_0)$.

 From the point of view of foliation theory, the leaves of $\F_{\cS}$ with holonomy are a ``small'' set by the proof of Theorem~\ref{thm-emt}.  There   always exists leaves without holonomy, while there may exist leaves with holonomy, and so the fundamental groups of the leaves may vary accordingly.  This aspect of the foliation dynamics of weak solenoids is discussed further in \cite[Section~4.2]{DHL2016c}.

 \section{Return equivalence}\label{sec-bundles}

The conclusion of Theorem~\ref{thm-weaksuspensions}  is that a weak solenoid is homeomorphic to a suspension space \eqref{eq-suspensionfols} of an equicontinuous action on a Cantor space. In this section, we consider  the notion of return equivalence between such suspension spaces.

Let   $\vp \colon G \times \fX \to \fX$ be a minimal    action on a Cantor space $\fX$. In order to give a precise definition of return equivalence, we 
introduce   the $\psg$ associated to the   action    $\vp$.  A more general  discussion of $\psg$s can be found in the works \cite{Hurder2014} and  \cite[Section~2.4]{HL2017a}.
 
 For each $g \in G$ and open subset $U \subset \fX$, let $\vp^U(g) \colon U \to V = \vp(g)(U)$ denote the restricted homeomorphism. Then the $\psg$ associated to $\vp$ is the collection of maps
 \begin{equation}
\Psi^*(\vp, \fX) \equiv \left\{ \vp^U(g) \mid U \subset \fX ~ {\rm open} ~ \ , ~ g \in G \right\} \ .
\end{equation}
The collection $\Psi^*(\vp, \fX)$ is not a pseudogroup, as it does not satisfy the   ``gluing'' condition on maps, but $\Psi^*(\vp, \fX)$ does generate   the usual pseudogroup $\Psi(\vp, \fX)$ associated to the action $\vp$   on $\fX$.
 
 Given an open subset $W \subset  \fX$, define the restriction of $\Psi^*(\vp, \fX)$  to $W$, 
 $$
 \Psi^*(\vp, W) =   \left\{ \vp^U(g) \mid U \subset W ~ {\rm open} ~  , ~ g \in G ~ , ~\vp(g)(U) \subset W \right\} \ .
 $$

\begin{defn}\label{def-repsg}
Let $\vp_i \colon G_i \times \fX_i \to \fX_i$ be minimal actions on Cantor spaces $\fX_i$ for $i=1,2$.
Then $\vp_1$ and $\vp_2$ are return equivalent   if there exists non-empty open sets $W_1 \subset \fX_1$ and $W_2 \subset \fX_2$, and a homeomorphism   $h \colon W_1 \to W_2$ which conjugates the restricted $\psg$ $\Psi^*(\vp_1, W_1)$ with the restricted $\psg$  $\Psi^*(\vp_2, W_2)$. 
\end{defn}

It is   an exercise to show that   minimal  suspension spaces $\fM_{\vp_1}$ and $\fM_{\vp_2}$ are return equivalent as foliated spaces, if and only if their associated global monodromy actions satisfy Definition~\ref{def-repsg}. 
 
We next introduce a notion which especially pertains to equicontinuous Cantor actions.

\begin{defn}\label{defn-adapted}
Let   $\vp \colon G \times \fX \to \fX$ be an action on a Cantor space $\fX$. 
A non-empty clopen subset $U \subset \fX$ is \emph{adapted} to the action $\vp$ if for any $g \in G$, 
$\vp(g)(U) \cap U \ne \emptyset$ implies that $\vp(g)(U) = U$. It follows that
 \begin{equation}
G_U = \left\{g \in G \mid \vp(g)(U) \cap U \ne \emptyset  \right\}  
\end{equation}
is a subgroup of $G$.
 \end{defn}

\begin{remark}\label{rmk-adapted}
{\rm 
For the action  $\Phi_x \colon G_0 \times X^x_{\infty} \to X^x_{\infty}$ of Lemma~\ref{lem-denseaction}, for each $\ell \geq 0$, the set $U = \fX^x_{\ell}$ is adapted with $G_U = G^x_{\ell}$ as defined in \eqref{eq-cosets}.
Note that if $V \subset U \subset \fX$ are both adapted to an action   $\vp \colon G \times \fX \to \fX$, with associated groups $G_V$ and $G_U$, then we have
$G_V = \{ g    \in  G_U \mid  \vp(g)(V) = (V)\}$. Moreover, if there exists a descending chain of clopen adapted sets $\{U_{\ell} \subset \fX \mid \ell \geq 0\}$ whose intersection is a point, then it is an exercise to show that the minimal action $\vp$ is equicontinuous. On the other hand, it is also easy to construct examples of actions which are not equicontinuous but admit a   proper adapted clopen subset $U \subset \fX$. For example,  consider any minimal Cantor action $\vp_U \colon G_U \times U \to U$, chose a non-trivial finite group $H$ and set  $G = H \times G_U$, then extend the action $\vp_U$ on $U$  to $\vp \colon G \times \fX \to \fX$  acting factor-wise  on the product space  $\fX = H \times U$.}
\end{remark}

We next establish two technical lemmas which are key for the proofs of Theorems~\ref{thm-main2} and \ref{thm-main3}.

\begin{lemma}\label{lem-key1}
Let   $\vp_i \colon G_i \times \fX_i \to \fX_i$ be    minimal actions on   Cantor spaces $\fX_i$, for $i=1,2$, and suppose there exists non-empty open sets 
$W_i \subset \fX_i$ and a homeomorphism   $h \colon W_1 \to W_2$ which conjugates the restricted $\psg$s $\Psi^*(\vp_{1}, W_{1})$ and $\Psi^*(\vp_{2}, W_{2})$. Then a clopen subset  $U_1 \subset W_1$ is adapted to the action $\vp_1$ if and only if $U_2 = h(U_1) \subset W_2$ is adapted to the action $\vp_2$.
\end{lemma}
\proof
We show that $U_2$ is adapted to the action of $\vp_2$. The reverse implication follows similarly. 

First note that $U_1$ is an open subset of $W_1$ and $h$ is a homeomorphism, hence $U_2$ is an open subset of $W_2$ in the relative topology on $\fX_2$ hence is an open subset of $\fX_2$. Also, $U_2$ is compact as $U_1$ is compact and all spaces are Hausdorff, thus $U_2$ is a clopen subset of $\fX_2$.

Let $g_2 \in G_2$ satisfy $\vp_2(g_2)(U_2) \cap U_2 \ne \emptyset$.
Let $h^* \colon \Psi^*(\vp_{2}, W_{2}) \to \Psi^*(\vp_{1}, W_{1})$ be the map induced by $h \colon W_1 \to W_2$ on the restricted $\psg$s. 
By assumption, this map is an isomorphism, and in particular  $h^*(\vp_2^{U_2}(g_2)) \in \Psi^*(\vp_{1}, W_{1})$.  Hence, there exists $g_1 \in G_1$ such that 
$\vp_1^{U_1}(g_1) = h^*(\vp_2^{U_2}(g_2))$. Thus, $\vp_1(g_1)(U_1) \cap U_1 \ne \emptyset$. As $U_1$ is adapted to the action of $\vp_1$ this implies that $\vp_1(g_1)(U_1) = U_1$, which implies that $\vp_2(g_2)(U_2) \cap U_2 = U_2$ as was to be shown.
\endproof

\begin{lemma}\label{lem-key2}
Let   $\vp \colon G \times \fX \to \fX$ be a  minimal action on a Cantor space $\fX$, and $U \subset \fX$ a clopen subset adapted to the action.
Then the collection  
$\cS_U \equiv \{\vp(g)(U) \mid g \in G\}$
forms a finite disjoint clopen partition of $\fX$.
\end{lemma}
\proof
We first show that the images form a disjoint partition. Suppose that for $g_1, g_2 \in G$ we have $\vp(g_1)(U) \cap \vp(g_2)(U) \ne \emptyset$.
Then $\vp(g_2^{-1} g_1)(U) \cap U \ne \emptyset$ hence $\vp(g_2^{-1} g_1)(U) = U$.
It follows that $\vp(g_1)(U) = \vp(g_2)(U)$. Each image $\vp(g_1)(U)$ is a clopen subset, and $\fX$ is compact, so there are only a finite number of disjoint images, which completes the proof.
\endproof

Assume that   $\vp \colon G \times \fX \to \fX$ is a  minimal action on a Cantor space $\fX$, and $U \subset \fX$ a clopen subset adapted to the action.
Let $p_U \colon \fX \to \cS_U$ be the natural map to the elements of the partition of $\fX$,  which exists by Lemma~\ref{lem-key2}.
Identify the collection $\cS_U$ with the quotient set $G/G_U$ via the   map $q_U(\vp(g)(U) ) = g G_U \in G/G_U$, then the composition 
$\pi_U = q_U \circ p_U \colon \fX \to G/G_U$ is $G$-equivariant.
  
Given an action $\vp \colon G \times \fX \to \fX$, we next construct the suspension foliated space for the action.  
Let   $M$ be a compact manifold without boundary, with     a basepoint $x_0 \in M$ and let $G = \pi_1(M, x_0)$ denote its fundamental group based at   $x_0$.  
 Let $\wtpi \colon \wtM \to M$ denote the universal covering space of $M$, defined by endpoint-fixed homotopy classes of paths in $M$ with initial point $x_0$.  Then $G$ acts on $\wtM$ on the right by deck transformations. 
  Define the quotient foliated space
\begin{equation}\label{eq-suspension}
\fM_{\vp} = (\wtM \times \fX) / \{(x \cdot \gamma, w) \sim (z , \vp(\gamma) \cdot w \} \quad , \quad z \in \wtM ~ , ~ w \in \fX ~, ~ \gamma \in G  \ .
\end{equation}
Let   $\pi \colon \fM_{\vp} \to M$ be the   map induced by the projection $\wtpi \colon \wtM \times \fX \to \wtM$ onto the first factor.

Now assume that the action $\vp$ admits a proper adapted clopen subset $U \subset \fX$. Then we   define 
\begin{equation}\label{eq-suspensionU}
M_{U} = (\wtM \times G/G_U) / \{(x \cdot g, w) \sim (z , g \cdot w \} \quad , \quad z \in \wtM ~ , ~ w \in G/G_U ~, ~ g \in G  \ .
\end{equation}
Note that $M_U$ is naturally identified with the finite covering space $\wtM/G_U$ of $M$ associated to the subgroup $G_U \subset G$.
Let $x_U \in M_U$ be the basepoint associated with the identity coset of $G/G_U$.

The quotient map $\pi_U  \colon \fX \to G/G_U$ induces a quotient map $\Pi_U \colon \fM_{\vp} \to M_U$ of suspension spaces, with     $U = \Pi_U^{-1}(x_U)  \subset \fX$, and there is  a commutative diagram:
 \begin{equation}\label{eq-collapse}
\xymatrix{
\fM_{\vp} \ar[d]_{\pi}\ar[rd]^{\Pi_U}& \\
M&M_{U}\ar[l]^{\pi_{G_U}}}
\end{equation}
 
 Note that the above construction applies to any minimal action with a proper adapted clopen subset. 
In the case where $U = \fX^x_{\ell}$ for an odometer action   $\vp = \Phi_x \colon G_0 \times X^x_{\infty} \to X^x_{\infty}$ and $\ell > 0$, then 
  $G_U = G^x_{\ell}$ as   in \eqref{eq-cosets} and the map fibration $\Pi_U$ is the same as the fibration $\Pi_{\ell}$   defined following \eqref{eq-coverings}.

We can now give a  result which is a key observation for the proofs of Theorems~\ref{thm-main2} and \ref{thm-main3}.
 For $i=1,2$, let   $\vp_i \colon G_i \times \fX_i \to \fX_i$ be a minimal action  on  the Cantor space $\fX_i$.
Let  $M_i$ be a compact manifold without boundary, with
   basepoint $x_i \in M_i$ and    $G_i = \pi_1(M_i, x_i)$   its fundamental group based at   $x_i$.  
Assume that the actions $\vp_1$ and $\vp_2$ return equivalent,   so there exists  open sets $W_i \subset \fX_i$ and a homeomorphism $h \colon W_1 \to W_2$ which conjugates the restricted $\psg$ $\Psi^*(\vp_1, W_1)$ with the restricted $\psg$  $\Psi^*(\vp_2, W_2)$.

Let $U_1 \subset W_1$ be a clopen subset which is adapted to the action $\vp_1$   then by Lemma~\ref{lem-key1} the image $U_2 = h(U_1)$ is a clopen subset adapted to the action $\vp_2$. 
For $i=1,2$, let 
$$G_{U_i}  = \left\{g \in G_i \mid \vp_i(g)(U_i) = U_i  \right\} \subset G_i$$
 be the stabilizer group of $U_i$ for the action $\vp_i$.  

The action  $\vp_i$ induces a homomorphism  $\vp_{U_i} \colon G_{U_i}  \to \Lambda_i \subset \Homeo(U_i)$ onto a subgroup $\Lambda_i$. 
Then the inverse of the restriction $h_{U_1} \colon U_1 \to U_2$ induces an isomorphism   $\lambda_h \colon \Lambda_1 \to \Lambda_2$. 

Let $\pi_{G_{U_i}} \colon  M_{U_i} \to M_i$ be the finite covering associated to $G_{U_i}$ with basepoint $x_{U_i} \in M_{U_i}$ over $x_i$.
A homeomorphism $f \colon M_{U_1} \to M_{U_2}$ is said to \emph{realize} $\lambda_h$ if the following diagram commutes:
\begin{align}\label{eq-realize}
\xymatrix{
\pi_1(M_{U_1} , x_{U_1})   \quad =   &  G_{U_1}  \ar[d]_{\vp_{U_1}}   \ar[r]^-{f_{\#}} & G_{U_2}   \ar[d]^{\vp_{U_2}}    &  = \quad  \pi_1(M_{U_2} , x_{U_2})    \\
   & \Lambda_1     \ar[r]^-{\lambda_h} & \Lambda_2 &   
}
\end{align}

By \eqref{eq-collapse} we can represent $\fM_i$ as a suspension space over $M_{U_i}$ with basepoint fiber $U_i$ 
and monodromy action $\vp_{U_i} \colon G_{U_i}  \to  \Homeo(U_i)$. Let $\wtf \colon \wtM_{U_1} \to \wtM_{U_2}$ denote the lift of $f$ to the universal covering spaces. Then the   product map 
\begin{equation}
\wtf \times h \colon \wtM_{_1} \times U_1 \to \wtM_{U_2} \times U_2
\end{equation}
is a homeomorphism, and intertwines   the diagonal actions of $G_1$ and $G_2$, so descends to a homeomorphism between $\fM_{\vp_1}$ and $\fM_{\vp_2}$.
We have thus shown:

\begin{prop}\label{prop-inducedhomeos} 
Suppose there exists a homeomorphism  $f \colon M_{U_1} \to M_{U_2}$ which realizes the isomorphism $\lambda_h \colon \Lambda_1 \to \Lambda_2$ between the  groups of fiber automorphisms induced by return equivalence. Then the suspension spaces 
$\fM_{\vp_1}$ and $\fM_{\vp_2}$ are homeomorphic.
\end{prop}

  \section{Proofs of main theorems}\label{sec-proofs}

In this section, we   use Proposition~\ref{prop-inducedhomeos} to obtain   proofs of  Theorems~\ref{thm-main2} and \ref{thm-main3}.
For $i =1,2$,  let $\fM_i$ be a matchbox manifold   homeomorphic to a weak solenoid $\cS_{\cP_i}$ defined by a presentation
 \begin{equation}
\cP_i   =    \{ p_{i, \ell+1} \colon M_{i, \ell+1} \to M_{i, \ell} \mid \ell \geq 0\} \ , 
\end{equation}
 where the base manifolds $M_{1,0}$ and $M_{2,0}$ both have dimension $n \geq 1$.    
 Let $\Pi_{\cP_i} \colon \cS_{\cP_i} \to M_{i,0}$   denote the projection onto the base manifold.  
 
 Let $x_{i,0} \in M_{i,0}$  be a basepoint, let $G_{i,0} = \pi_1(M_{i, 0} , x_{i,0})$ and set  $\fX_{\cP_i} = \Pi_{\cP_i}^{-1}(x_{i,0})$.  
  
   The assumption that the holonomy pseudogroups defined by the foliations on $\fM_1$ and $\fM_2$ are return equivalent implies that the foliations of $\cS_{\cP_1}$ and $\cS_{\cP_2}$ are return equivalent. This in turn implies that the global monodromy actions 
  $$ \Phi_{\cP_1} \colon  G_{1,0} \times \fX_{1} \to \fX_{1} ~ , ~  \Phi_{\cP_2} \colon  G_{2,0} \times \fX_{2} \to \fX_{2}$$ 
  are return equivalent in the sense of Definition~\ref{def-repsg}.
  That is, there exists open sets $W_1 \subset \fX_{1}$ and $W_2 \subset \fX_{2}$ and a homeomorphism $h \colon W_1 \to W_2$ which   conjugates the restricted $\psg$ $\Psi^*(\Phi_{\cP_1}, W_1)$ with the restricted $\psg$  $\Psi^*(\Phi_{\cP_2}, W_2)$.   

\subsection{Odometer models} \label{subsec-odometer}

Assume we are given weak solenoids  $\cS_{\cP_1}$ and $\cS_{\cP_2}$.
Then as shown in Theorem~\ref{thm-weaksuspensions}, we can assume that  the weak solenoids $\cS_{\cP_i}$ are homeomorphic to the suspension of odometer actions as in  \eqref{eq-suspensionfols}. To fix notation, recall the construction of the odometer actions.  
Choose a basepoint $x \in W_1 \subset \fX_{1}$, and set $y = h(x) \in W_2 \subset \fX_{2}$. Then     form the group chains corresponding to the presentations  $\cP_1$ at $x$,  and $\cP_2$ at $y$:
 \begin{eqnarray}
\cG^{x}_{\cP_1} \equiv  \{ G^{x}_{1,\ell} \}_{\ell \geq 0}   & = &     \left\{G_{1,0} \supset G^{x}_{1,1} \supset G^{x}_{1,2} \supset \cdots \supset G^{x}_{1,\ell} \supset \cdots \right\}   \label{eq-descendingchainP1}  \\
\cG^{y}_{\cP_2} \equiv  \{ G^{y}_{2,\ell} \}_{\ell \geq 0}   & = &     \left\{G_{2,0} \supset G^{y}_{2,1} \supset G^{y}_{2,2} \supset \cdots \supset G^{y}_{2,\ell} \supset \cdots \right\} \ . \label{eq-descendingchainP2}  
\end{eqnarray}

Let   $\Phi_1 \colon G_{1,0} \times X_{1,\infty} \to X_{1,\infty}$ be the odometer formed from the chain $\cG^{x}_{\cP_1}$ and let $\tau_{1,x} \colon X_{1,\infty} \to \fX_{\cP_1}$ be the $G_{1,0}$-equivariant homeomorphism constructed in  Section~\ref{sec-solenoids}. 
Then we have $\tau_{1,x}(e_{1,0}) = x$ where $e_{1,0} = (e G^{x}_{1,\ell})$ is the basepoint of $X_{1,\infty}$.
Moreover,  recall from  \eqref{eq-cosets} that for $\ell > 0$, we have
$$G^x_{1,\ell} = \{ g \in G_{1,0} \mid \Phi_{\cP_1}(g) ( \fX_{i,\ell} ) = \fX_{i,\ell}\} \ .$$

Similarly, let $\Phi_2 \colon G_{2,0} \times X_{2,\infty} \to X_{2,\infty}$ be the odometer formed from the chain $\cG^{y}_{\cP_2}$ and let $\tau_{2,y} \colon X_{2,\infty} \to \fX_{\cP_2}$ be the corresponding $G_{2,0}$-equivariant homeomorphism with $\tau_{2,y}(e_{2,0}) = y$.

   The preimage $\tau_{1,x}^{-1}(\fX^x_{1,\ell} )$ is identified with the clopen set
 \begin{equation}\label{eq-nbhdbasis}
U_{1,\ell} = \{ (g_{k} G^{x}_{1,k}) \mid k \geq 0 , g_0 = g_1 = \cdots = g_{\ell} \in G^{x}_{1,\ell} \} \subset X_{1,\infty} \ .
\end{equation}
The collection $\{\fX^x_{1,\ell} \mid \ell > 0\}$ is a neighborhood basis around the basepoint $x \in W_1$,   so there exists $\ell_1 > 0$ such that $U_{1,\ell} \subset \tau_{1,x}^{-1}(W_1)$ for $\ell \geq \ell_1$. Set $U_1 = U_{1,\ell_1}$ then the clopen subset $U_1$ is adapted to the action of $\Phi_1$ with stabilizer subgroup $G_{U_1} = G^x_{\ell_1}$  by Remark~\ref{rmk-adapted}. 
Thus, the action  $\Phi_1$ induces an epimorphism    $\Phi_{U_1} \colon G_{U_1}  \to \Lambda_1 \subset \Homeo(U_1)$.

The image $h \circ \tau_{1}(U_1) \subset  \fX_{2}$ is a clopen subset adapted to the action of $\Phi_{\cP_2}$ by Lemma~\ref{lem-key1}. 
Set $U_2 = \tau_2^{-1} \circ h \circ \tau_{1}(U_1) \subset X_{2,\infty}$, which is a clopen set adapted to the action $\Phi_2$. Let $G_{U_2} \subset G_{2,0}$ be the stabilizer group of $U_2$. Then the action  $\Phi_2$ induces an epimorphism    $\Phi_{U_2} \colon G_{U_2}  \to \Lambda_2 \subset \Homeo(U_2)$.
Moreover,  the homeomorphism  $\tau_2^{-1} \circ h \circ \tau_{1} \colon U_1 \to U_2$   induces an isomorphism     $\lambda_h \colon \Lambda_1 \to \Lambda_2$. 

\begin{remark}{\rm 
 Before continuing with the proofs of the main theorems, we recall an aspect of the equivalence of weak solenoids from \cite{FO2002} and which is discussed in detail in \cite{DHL2016a}. The basepoint $e_{2,0} \in V$ so there exists $\ell_2 > 0$ such that $V_{2,\ell} \subset V$ for $\ell \geq \ell_2$, where $V_{2,\ell}$ is defined as in \eqref{eq-nbhdbasis}.  For the action $\Phi_2$ the group $G_{2,\ell}$ stabilizes the clopen set $V_{2,\ell}$ and hence also stabilizes $V$.
 However, it need not be the case that $G_V$ is equal to one of the subgroups $G_{2,\ell}$. It is only possible to conclude that there exists some $\ell \geq \ell_2$ for which $G_{2,\ell} \subset G_V$. This corresponds to the fact that homeomorphic weak solenoids are defined by  group chains which are equivalent in the sense of \cite{DHL2016a,FO2002}, which is to say that their group chains are interlaced up to isomorphism.}
 \end{remark}
  
 By  Lemma~\ref{lem-key2}, the collection $\cS_2 \equiv \{\Phi_{2}(g)(U_2) \mid g \in G_{2,0}\}$ is a clopen partition of $X_{2,\infty}$.
We will  apply  Proposition~\ref{prop-inducedhomeos} to show that the suspension spaces $\fM_{\Phi_1}$ and $\fM_{\Phi_2}$ are homeomorphic. First, we must   construct  a map of fundamental groups $f_* \colon G_{U_1}  \to G_{U_2}$ so that the diagram \eqref{eq-realize} is satisfied, and then construct a homeomorphism $f \colon M_{U_1} \to M_{U_2}$ which induces the map $f_*$.     
 
\subsection{Proof of Theorem~\ref{thm-main2}} \label{subsec-thm2}
For $i=1,2$, we are given that $\cS_{\cP_i}$ is a  toroidal solenoid  whose base has  dimension $n$, so   $M_{i,0} =  \mT^n$ and hence $G_{i,0}  \cong \mZ^n$. The manifold $M_{U_i}$ is a covering of $M_{i,0}$ hence is also a torus, with fundamental group which we identify with $\mZ^n$. 
Introduce the subgroups
\begin{equation}
K_i = ker \{ \Phi_{U_i} \colon  G_{U_i} \to \Lambda_i \subset \Homeo(U_i) \}   \subset \mZ^n \ .
\end{equation}
Each $K_i$   is  a free abelian subgroup with rank $0 \leq r_i < n$, and there is    a commutative diagram:
\begin{align}\label{eq-abeliansquare}
\xymatrix{
K_1  \ar@{^{(}->}[r]  &  G_{U_1}  \ar@{->>}[r]^{\Phi_{U_1}} \ar@{.>}[d]_{f_*}   &  \Lambda_1  \ar[d]^{\lambda_h}_{\cong}  \\
 K_2 \ar@{^{(}->}[r]  & G_{U_2}  \ar@{->>}[r]^{\Phi_{U_2}} & \Lambda_2  
}
\end{align}

\begin{lemma}\label{lem-extension}
There exists a map $f_* \colon G_{U_1} \to G_{U_2}$ such that the diagram \eqref{eq-abeliansquare} commutes.
\end{lemma}
\proof
This follows because    $G_{U_1} \cong G_{U_2} \cong \mZ^n$ are free abelian groups, hence   projective $\mZ$-modules. 
We give the details of the construction of the map $f_*$.
Let $\{a_1, \ldots , a_d\} \subset \Lambda_1$ be a minimal set of generators for $\Lambda_1$, then 
$\{\lambda_h(a_1), \ldots , \lambda_h(a_d)\} \subset \Lambda_2$ is a minimal set of generators for $\Lambda_2$. 

Choose   $\{g_1, \ldots , g_{d}\} \subset G_{U_1}$ so that 
$a_i = \Phi_{U_1}(g_i)$ for $1 \leq i \leq d$. The kernel $K_1$ is free abelian, so we can extend this set to a basis $\{g_1, \ldots , g_{n}\}$ 
for $G_{U_1}$  where $\Phi_{U_1}(g_i)$   is the identity for $d <  i \leq n$.

Choose elements  $\{g'_1, \ldots , g'_{d}\} \subset G_{U_2}$ so that 
$\lambda_h(a_i) = \Phi_{U_2}(g'_i)$ for $1 \leq i \leq d$.
 Note that both $K_1$ and $K_2$ are   free abelian of rank $n-d$, so we can extend this set to a basis $\{g'_1, \ldots , g'_{n}\}$ 
for $G_{U_2}$  where $\Phi_{U_2}(g'_i)$   is the identity for $d <  i \leq n$.
 
 Define the group isomorphism  $\ds f_* \colon G_{U_1} \to G_{U_2}$ by specifying $f_*(g_i) = g'_i$ for $1 \leq i \leq n$. Then the diagram \eqref{eq-abeliansquare} commutes by our choices of these bases.
\endproof

 Finally, to complete the proof of Theorem~\ref{thm-main2}, observe that $f_*$ extends to a linear map  $\widehat{f_*} \colon \mR^n \to \mR^n$, and so induces a diffeomorphism of the quotient spaces   $f \colon \mT^n \to \mT^n$. Then  the hypotheses of Proposition~\ref{prop-inducedhomeos} are satisfied.

\subsection{Proof of Theorem~\ref{thm-main3}}\label{subsec-thm3}
The proof of Theorem~\ref{thm-main3} uses the geometric hypotheses on the foliations of the weak solenoids $\cS_{\cP_i}$ to show the existence of the map $f_*$ such that the diagram \eqref{eq-abeliansquare} commutes,  in place of the group extension arguments in the proof of Lemma~\ref{lem-extension}. In particular, we  assume that the foliations on  $\cS_{\cP_1}$ and $\cS_{\cP_2}$  each contain      a dense  leaf which is    simply connected. 
By the results of Section~\ref{subsec-odometer}, we can assume that $\cS_{\cP_1}$ and $\cS_{\cP_2}$ are represented as suspensions of odometer actions, and thus it suffices to show that   the hypotheses of Proposition~\ref{prop-inducedhomeos} are satisfied.

We assume that the odometer actions $\Phi_i \colon G_{i,0} \times X_{i,\infty} \to X_{\infty}$ are return equivalent, for $i=1,2$, and that open subsets $W_i \subset X_{i,0}$ are chosen so that the restricted $\psg$ $\Psi^*(\Phi_{1}, W_1)$ is conjugate to   the restricted $\psg$  $\Psi^*(\Phi_{2}, W_2)$.
Then let $U_i \subset W_i$ be chosen as above, with a homeomorphism $h \colon U_1 \to U_2$ conjugating the restricted actions
  $\Phi_{U_i} \colon G_{U_i}  \to \Lambda_i \subset \Homeo(U_i)$.

 Let $K_i \subset G_{U_i}$ denote the kernel of the map $\Phi_{U_i}$, and for  $z \in U_i$ define:
  \begin{equation}\label{eq-isotropyz}
K_i(z) = \{ g \in G_{U_i} \mid \Phi_{U_i}(g)(z) = z\} \ .
\end{equation}
Observe that $K_i \subset K_i(z)$ for all $z \in U_i$.

By the definition \eqref{eq-suspension} of the suspension space $\fM_{\Phi_{U_i}}$ the leaf $\ds L_z \subset \fM_{\Phi_{U_i}}$ defined by the point $z$  is homeomorphic to  the covering $\wtM_i/K_i(z) \to M_i$. By assumption, for each $i=1,2$ there exists $z \in U_i$ so that $L_z$ is simply connected, which implies that $K_i(z)$ is the trivial group, which implies that the kernel $K_i$ is also the trivial group. Thus,  the map $\Phi_{U_i} \colon G_{U_i}  \to \Lambda_i$ is an isomorphism. Define the map 
\begin{equation}
f_* \equiv  \Phi_{U_2}^{-1} \circ \lambda_h \circ \Phi_{U_1} \colon G_{U_1} \to G_{U_2} 
\end{equation}
 which is an isomorphism such that    the  diagram \eqref{eq-abeliansquare} commutes.

 By the hypotheses of Theorem~\ref{thm-main3} the manifolds $M_1$ and $M_2$ are both strongly Borel, hence their finite coverings $M_{U_1}$ and $M_{U_2}$ satisfy the Borel Conjecture. The map $f_*$ induces a homotopy equivalence between them, as both have contractible universal covering spaces. Then by the solution of the Borel Conjecture for these spaces, there exists a homeomorphism $f \colon M_{U_1} \to M_{U_2}$ which induces the map $f_*$ on their fundamental groups.
 This completes the proof of Theorem~\ref{thm-main3}.
 
 \medskip

 \begin{remark}
{\rm
Note that the choice of the  clopen set $U_i$   in the  above proofs can be chosen to have arbitrarily small diameter, and hence the degree of the corresponding covering map $\pi_{U_i} \colon M_{U_i} \to M_i$ in \eqref{eq-collapse} can be chosen to be arbitrarily large. 
  As remarked   in \cite{Davis2012}, the homeomorphism $f$ that is obtained from the solutions of the Borel Conjecture  can be assume to be smooth for a sufficiently large finite covering. It follows that the homeomorphism $h \colon \cS_{\cP_1} \to \cS_{\cP_2}$ obtained from Proposition~\ref{prop-inducedhomeos} can be chosen to be   smooth along leaves.
}
 \end{remark}

\section{Examples and counter-examples}\label{sec-examples}

In this section, we give several examples to illustrate the necessity of the hypotheses of Theorem~\ref{thm-main3}. 
We first recall a classical result, the classification of Vietoris  solenoids of dimension one. We then consider extensions of this construction to solenoids with dimension $n \geq2$ and give examples of solenoids which are return equivalent but not homeomorphic. These examples are   essentially the simplest possible constructions. Many other variants on their construction are clearly possible, especially for solenoids of dimensions greater than two, as briefly discussed in Section~\ref{subsec-higher}. 

\subsection{Vietoris solenoids}

A \emph{Vietoris solenoid}  \cite{vanDantzig1930,Vietoris1927}   is   a $1$-dimensional solenoid $\cS_{\cP}$, where each $M_{\ell}$ is a circle, and each $p_{\ell} \colon \mS^1 \to \mS^1$ in the presentation $\cP$ is an orientation preserving covering map of degree   $m_{\ell} \geq 2$.   
Let $\vec{m} = \{m_1, m_2, \ldots\}$ be the list of covering degrees for $\cP$. Then $\cS_{\cP}$ is also called an $\vec{m}$-\emph{adic} solenoid of dimension one, and denoted by $\cS(\vec{m})$.

Let  $\vec{m} = \{m_{\ell} \mid \ell \geq 1\}$   denote a sequence of positive integers with each $m_i \geq  2$. Set $m_0=1$, then 
define the  profinite group
\begin{eqnarray}
\fG_{\vec{m}} &\myeq &\underleftarrow{\lim}~ \left\{ \, q_{\ell+1} \colon \mZ/m_1\cdots m_{\ell+1}\mZ \to \mZ/m_0m_1\cdots m_{\ell}\mZ \mid \ell \geq 1 \, \right\} \label{eq-Madicgroup} \\
&=&\underleftarrow{\lim}~ \left\{ \mZ/\mZ \xleftarrow{m_1} \mZ/m_1\mZ \xleftarrow{m_2} \mZ/m_1m_2\mZ \xleftarrow{m_3} \mZ/m_1m_2m_3\mZ\xleftarrow{m_4}\cdots \right\} \nonumber
\end{eqnarray}
where $q_{\ell+1}$ is the quotient map of degree $m_{\ell+1}$. 
 Each of the profinite groups $\fG_{\vec{m}}$ contains a  copy of $\mZ$ embedded as a dense subgroup by $z \to ([z]_0,[z]_1,...,[z]_k,...),$ where $[z]_k$ corresponds to the class of $z$ in the  quotient group $\ds \mZ/m_0\cdots m_k\mZ$. There is a homeomorphism $a_{\vec{m}} \colon \fG_{\vec{m}} \to \fG_{\vec{m}}$  given by ``addition of $1$'' in each finite factor group.  The resulting action of $\mZ$ is denoted by 
 $\Phi_{\vec{m}} \colon \mZ \times  \fG_{\vec{m}} \to \fG_{\vec{m}}$. The dynamics of $a_{\vec{m}}$ acting on $\fG_{\vec{m}}$  is   referred to as an \emph{adding machine}, or equivalently as a (classical) \emph{odometer}. We then have the standard result:
 
 \begin{prop}
The Vietoris solenoid $\cS(\vec{m}) $ is   homeomorphic to the suspension $\fM_{\Phi_{\vec{m}}}$ of the odometer action $\Phi_{\vec{m}}$  with base manifold $M_0 = \mS^1$.
\end{prop}

 Two Vietoris solenoids $\cS_{\cP}$ and $\cS_{\cQ}$ are homeomorphic if and only if their presentations $\cP$ and $\cQ$ yield group chains as in \eqref{eq-descendingchain} which are equivalent. As all of these are chains of subgroups of the fundamental group $\mZ$ of $\mS^1$, the equivalence problem for these chains reduces to giving conditions on the sequences of integer covering degrees in $\cP$ and $\cQ$ which imply equivalence of the chains. There are two invariants of sequences which arise in the classification problem. First, consider the function which counts the total number of occurrences of a given prime in the sequence of integers $\vec{m}$.
\begin{defn}\label{def-primefunction}
Given a sequence of positive integers $\vec{m}$ as above, let $C_{\vec{m}}$ denote the function from the set of
prime numbers to the set of extended natural numbers $\{0, 1, 2, . . .,\infty\}$ given by
$$C_{\vec{m}}(p)=\sum_1^\infty m_i(p),$$
where $m_i(p)$ is the power of the prime $p$ in the prime factorization of $m_i.$
\end{defn}
That is, $C_{\vec{m}}(p) = k$ means that the prime $p$ occurs a total of $k$ times in the prime factorization of the integers in the sequence $\vec{m}$.

\begin{thm}
The Vietoris solenoids $\cS(\vec{m}) $ and $\cS(\vec{n})$  are homeomorphic \emph{as bundles over the base manifold $\mS^1$} if and only if $C_{\vec{m}}(p) = C_{\vec{n}}(p)$ for all primes $p$.
\end{thm}

Next, we recall the notion of ``tail equivalence'' on sequences. This notion was introduced by Bing in \cite{Bing1960}, and plays a basic role in the study of return equivalence for Vietoris solenoids  in  \cite{AartsFokkink1991}.   

\begin{defn}\label{def-tailequiv}
 Two infinite sets of integers, $\vec{m} = \{m_{\ell} \mid \ell \geq 1\}$ and $\vec{n} = \{n_{\ell} \mid \ell \geq 1\}$, are said to be \emph{tail equivalent}, and we write $\vec{m} \mor \vec{n}$, if there exists cofinite  subsequences $\vec{m}_* \subset \vec{m}$ and $\vec{n}_* \subset \vec{n}$ which  are in bijective correspondence. 
\end{defn}

The following observation is a direct consequence  of   Definitions~\ref{def-primefunction} and \ref{def-tailequiv}.
\begin{lemma}\label{lem-MorEquvSequ}
Two sequences of integers $\vec{m}$ and $\vec{n}$ as above are  tail equivalent   if and only if the following two conditions hold:
\begin{enumerate}
  \item For all but finitely many primes $p$, $C_{\vec{m}}(p)=C_{\vec{n}}(p)$, and
  \item for all primes $p$, $C_{\vec{m}}(p)= \infty$  if and only if $C_{\vec{n}}(p)=\infty$. 
\end{enumerate}
\end{lemma}

 The classification of  Vietoris solenoids up to homeomorphism by Bing     \cite{Bing1960} and McCord \cite{McCord1965}, and the study of return equivalence   by  Aarts and Fokkink   in \cite{AartsFokkink1991}  yields:   
   
\begin{thm}\cite{McCord1965,AartsFokkink1991}\label{thm-onedimSol}
The Vietoris solenoids $\cS(\vec{m}) $ and $\cS(\vec{n}) $ are homeomorphic  if and only if they are return equivalent, if and only if    $\vec{m}$ and $\vec{n}$ are tail equivalent. 
\end{thm}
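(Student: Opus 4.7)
The plan is to establish the homeomorphism classification first and then deduce the return equivalence statement as an immediate corollary of Theorem~\ref{thm-main2}. For the forward direction ($\vec{m} \mor \vec{n} \Rightarrow \cS(\vec{m}) \homeo \cS(\vec{n})$), I would first observe that replacing $\vec{m}$ by any tail $(m_{k+1}, m_{k+2}, \ldots)$ yields a homeomorphic solenoid, since truncating the inverse system $p_{\ell+1}:\mS^1 \to \mS^1$ gives a cofinal subsystem. Using this, reduce to the case $C_{\vec{m}}(p) = C_{\vec{n}}(p)$ for every prime $p$: condition (2) of Definition~\ref{defn-MorEquvSequ} ensures the primes with infinite $C$-value already coincide, while condition (1) allows us to absorb the finitely many remaining discrepancies by passing to sufficiently deep tails. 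Once $C_{\vec{m}} = C_{\vec{n}}$, regroup consecutive factors of each sequence so that the two resulting sequences have identical partial products at a cofinal collection of levels. This produces a level-preserving morphism between the two inverse systems of circle coverings, hence a homeomorphism of the inverse limits.

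For the reverse direction, I would use \v{C}ech cohomology as the classifying invariant. Continuity of \v{C}ech cohomology under inverse limits gives
$$\check{H}^1(\cS(\vec{m}); \mZ) \;\cong\; \underrightarrow{\lim}\,\{\mZ \xrightarrow{\times m_1} \mZ \xrightarrow{\times m_2} \mZ \xrightarrow{\times m_3} \cdots\},$$
which one identifies with the additive subgroup $\mZ[\vec{m}^{-1}] \subset \mathbb{Q}$ generated by all fractions $1/(m_1 \cdots m_k)$. This is a rank-one torsion-free abelian group, and Baer's classification (from \cite{Baer1944}) assigns to it a \emph{type}: the function from primes to $\{0,1,2,\ldots,\infty\}$ recording, at each prime $p$, the supremum of $p$-adic valuations appearing in the group. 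A direct computation shows this supremum equals $C_{\vec{m}}(p)$. Baer proved that two such subgroups are isomorphic precisely when their types agree off finitely many primes and take the value $\infty$ at the same set of primes, which is exactly the relation $\mor$.

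For the final ``Thus'' clause, Theorem~\ref{thm-topinv} already gives that homeomorphism implies return equivalence. Conversely, each $\cS(\vec{m})$ is an equicontinuous matchbox manifold of leaf dimension one that is $\mS^1$-like: the projection $\pi_k \colon \cS(\vec{m}) \to \mS^1$ onto the $k$-th factor in the inverse limit has fibers whose diameters tend to $0$ as $k \to \infty$ in the product metric. Since $\mS^1 = \mT^1$, Theorem~\ref{thm-main2} applies directly, so return equivalence implies homeomorphism. The main obstacle in the overall argument is the Baer classification step in the second direction, which requires carefully translating the $p$-adic valuation data of $\mZ[\vec{m}^{-1}]$ into the combinatorial function $C_{\vec{m}}$ and verifying the precise equivalence relation on types matches Definition~\ref{defn-MorEquvSequ} — the rest of the argument is essentially a bookkeeping exercise with inverse systems of circle coverings.
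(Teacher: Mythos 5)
The paper offers no proof of this statement---it is quoted from McCord, Aarts--Fokkink and Block--Keesling---so your argument has to stand on its own. Your reverse direction is sound: continuity of \v{C}ech cohomology gives $\check{H}^1(\cS(\vec{m}))\cong\underrightarrow{\lim}(\mZ\xrightarrow{m_1}\mZ\xrightarrow{m_2}\cdots)$, the height of $1$ in this rank-one group at a prime $p$ is exactly $C_{\vec{m}}(p)$, and Baer's classification of rank-one torsion-free groups by type is precisely the relation $\mor$ of Definition~\ref{defn-MorEquvSequ}. The closing clause via Theorem~\ref{thm-topinv} and Theorem~\ref{thm-main2} is also exactly what the paper intends.

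The gap is in the forward direction, at the step ``pass to sufficiently deep tails to achieve $C_{\vec{m}}=C_{\vec{n}}$.'' Truncation subtracts from the $C$-functions the valuation vectors of partial products, and these are rigidly coupled across primes, so such tails need not exist. Concretely, let $q_1<q_2<\cdots$ be the odd primes and set $\vec{m}=(2q_1,\,q_2q_3,\,q_4q_5,\dots)$, $\vec{n}=(q_1q_2,\,q_3q_4,\dots)$. Then $\vec{m}\,\mor\,\vec{n}$ (the only discrepancy is $C_{\vec{m}}(2)=1$ versus $C_{\vec{n}}(2)=0$), but the tails starting at $m_{K+1}$ and $n_{J+1}$ have $C$-value $0$ at exactly $\{q_1,\dots,q_{2K-1}\}$, respectively $\{q_1,\dots,q_{2J}\}$, and $2K-1=2J$ is impossible; hence no pair of tails has equal $C$-functions (refining to prime sequences first does not rescue this, as a similar reordering defeats it). The standard repair goes in the opposite direction: \emph{prepend} rather than truncate. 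Since $\cS(\vec{m})\homeo\cS(a,m_1,m_2,\dots)$ for any integer $a\geq 1$ (the original system is a cofinal tail of the enlarged one), choose integers $a,b$ with $v_p(a)-v_p(b)=C_{\vec{n}}(p)-C_{\vec{m}}(p)$ at the finitely many finite discrepancy primes; then the prepended sequences have identical $C$-functions. A secondary imprecision: even with $C_{\vec{m}}=C_{\vec{n}}$ the two sequences of partial products need not share a cofinal set of common values (compare $(4,4,4,\dots)$ with $(2,4,4,\dots)$, whose partial products are $4,16,64,\dots$ and $2,8,32,\dots$), so ``identical partial products at a cofinal collection of levels'' is not available; what equality of the $C$-functions actually yields is an interleaved divisibility chain $M_{K_1}\mid N_{J_1}\mid M_{K_2}\mid\cdots$, i.e.\ mutual cofinality of the subgroup systems $\{M_K\mZ\}$ and $\{N_J\mZ\}$ in $\mZ$, which still identifies the two inverse limits and completes the argument.
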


\subsection{$\vec{m}$-adic solenoids of   dimension two} 

Let $\Sigma_g$ be a closed surface of genus $g \geq 1$, which is obtained by attaching $g$ torus handles $\mT^2 = \mS^1 \times \mS^1$ to the $2$-sphere $\mS^2$.  For example, $\Sigma_1$ is homeomorphic to  the $2$-torus $\mT^2$. Pick a basepoint $x_0 \in \Sigma_g$ and let $G_0 = \pi_1(\Sigma_g , x_0)$ be the fundamental group. Choose an epimorphism $a \colon G_0 \to \mZ$, which corresponds to a non-trivial class $[a] \in H^1(\Sigma_g ; \mZ)$ in integral homology.
 
 Let $\vec{m} = \{m_{\ell} \mid \ell \geq 1\}$ denote a sequence of integers with each $m_i \geq  2$,  and form the profinite $\vec{m}$-adic group $\fG_{\vec{m}}$ as in \eqref{eq-Madicgroup}. Let $\Phi_{\vec{m}}$ denote the odometer action of $\mZ$ described above. Extend this to an action of $G_0$ 
 \begin{equation}
\Phi_{\vec{m}}^a \colon G_0 \times  \fG_{\vec{m}} \to \fG_{\vec{m}} \quad: \quad \Phi_{\vec{m}}^a(g)(x) = \Phi_{\vec{m}}(a(g))(x) ~ , ~ g \in G_0 ~, ~ x \in  \fG_{\vec{m}}
\end{equation}

 \begin{defn} \label{def-madicsurface}
 The $\vec{m}$-adic surface  $\fM(\Sigma_{g}, a, \vec{m})$  is the suspension space \eqref{eq-suspension} associated to the the action $\Phi_{\vec{m}}^a$ with base $\Sigma_g$. 
 \end{defn}
 
 We note a consequence of the construction of   $\fM(\Sigma_{g}, a, \vec{m})$, which follows immediately from the fact that   the action $\Phi_{\vec{m}}^a$ is induced from the action $\Phi_{\vec{m}}$ and the results of \cite{AartsFokkink1991}:
 \begin{prop}\label{prop-surfadicMor}
Given closed orientable surfaces $\Sigma_{g_1}$ and $\Sigma_{g_2}$ of genus $g_i \geq 1$ for $i=1,2$,  epimorphisms $a_i \colon G_{i,0} \to \mZ$ and  sequences $\vec{m}$ and $\vec{n}$, then 
$\fM(\Sigma_{g_1} , a_1 ,  \vec{m})$  is return equivalent to $\fM(\Sigma_{g_2} , a_2 , \vec{n})$  if and only if $\vec{m}$ and $\vec{n}$ are tail equivalent.
\end{prop}

Finally, we consider the problem, given \emph{adic}-surfaces $\fM(\Sigma_{g_1} , a_1 , \vec{m})$ and $\fM(\Sigma_{g_2} , a_2, \vec{n})$ such that  $\vec{m}$ is tail equivalent to $\vec{n}$, when are they homeomorphic as matchbox manifolds? 
First, consider the case of genus $g_1 = g_2=1$ so that  $\ds \Sigma_{g_1}= \Sigma_{g_2} = \mT^2$. 
Then   Theorem~\ref{thm-main2} and Proposition~\ref{prop-surfadicMor} yield:
\begin{thm}\label{thm-torusclass}
The \emph{adic}-surfaces   $\fM(\mT^2, a_1 , \vec{m})$ and $\fM(\mT^2, a_2 , \vec{n})$ are homeomorphic if and only if $\vec{m}$ and $\vec{n}$ are tail equivalent.
\end{thm}

For the general case of \emph{adic}-surfaces where at least one base manifold has higher genus, we next give examples of weak solenoids which are return equivalent but not homeomorphic. Note that in these examples, their base manifolds are compact surfaces hence are strongly Borel, but all their leaves   have non-trivial fundamental groups, so the hypotheses of Theorem~\ref{thm-main3} are not satisfied.
\begin{thm}\label{thm-surfaces}
Let $\fM_1 = \fM(\Sigma_{g_1} , a_1 , \vec{m})$ and $\fM_2 = \fM(\Sigma_{g_2} , a_2 , \vec{n})$ be \emph{adic}-surfaces.
\begin{enumerate}
\item If $g_1 > 1$ and $g_2 = 1$, then $\fM_1$ and $\fM_2$ are never homeomorphic.
\item If $g_1 = g_2 > 1$  and $a_1 = a_2$, then $\fM_1$ and $\fM_2$ are   homeomorphic if and only if   $C_{\vec{m}}=C_{\vec{n}}$.
\item If $g_1 = g_2 > 1$ and $a_1 = a_2$, then there exists $\vec{m} \mor \vec{n}$   but  $\fM_1 \not\approx \fM_2$. 
\end{enumerate}
\end{thm}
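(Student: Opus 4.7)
The plan is to treat the three claims separately, using leaf topology for (1), a shape-theoretic argument combined with the Borel property of surfaces for (2), and a concrete choice of sequences for (3).

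For (1), the key observation is that because the odometer $a_{\vec{m}}$ acts freely on $\fG_{\vec{m}}$, the stabilizer of any $\omega \in \fG_{\vec{m}}$ under the global holonomy $A_{\vec{m}} = a_{\vec{m}} \circ h_0$ equals $\ker(h_0) \subset \pi_1(\Sigma_g,\mathbf{x_0})$, independently of $\omega$. Hence every leaf of $\fM(\Sigma_g, \vec{m})$ has fundamental group isomorphic to $\ker(h_0)$. For $g_2 = 1$ one has $\ker(h_0) \cong \mZ$, so each leaf of $\fM_2$ is a cylinder $\mR \times \mS^1$. For $g_1 > 1$, $\ker(h_0)$ contains $\alpha_2, \beta_1, \dots, \alpha_{g_1}, \beta_{g_1}$ and, being a normal subgroup of infinite index in a surface group, is a free group of infinite rank, so no leaf of $\fM_1$ is a cylinder. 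Since any homeomorphism $\fM_1 \to \fM_2$ is a foliated map by Lemma~\ref{lem-foliated1} and therefore sends leaves homeomorphically onto leaves, (1) follows.

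For (2), the backward direction is straightforward: if $C_{\vec{m}} = C_{\vec{n}}$ then by the Chinese Remainder decomposition there is a topological group isomorphism $\fG_{\vec{m}} \cong \fG_{\vec{n}}$ conjugating $a_{\vec{m}}$ with $a_{\vec{n}}$; composition with $h_0$ conjugates the two global holonomy representations, and Proposition~\ref{prop-changeoffiber} yields a foliated bundle homeomorphism $\fM_1 \to \fM_2$. For the forward direction, I use the presentation
\[
\fM(\Sigma_g, \vec{m}) \homeo \underleftarrow{\lim}\, \{\, q^{(m)}_{\ell+1} \colon M^{(m)}_{\ell+1} \to M^{(m)}_\ell \mid \ell \geq 0\},
\]
obtained by pulling back along $h_0 \colon \Sigma_g \to \mS^1$ the standard presentation of $\cS(\vec{m})$; here $M^{(m)}_\ell$ is the connected cyclic cover of $\Sigma_g$ of degree $m_1 \cdots m_\ell$ corresponding to $h_0^{-1}((m_1\cdots m_\ell)\mZ)$, which by Riemann--Hurwitz is a closed orientable surface of genus $m_1 \cdots m_\ell(g-1) + 1 \geq 2$ and hence strongly Borel. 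Given a homeomorphism $\fM_1 \approx \fM_2$, the induced shape equivalence combined with the Morita level-morphism machinery invoked in the proof of Proposition~\ref{prop-aspher} produces commutative diagrams of homotopy equivalences between cofinal subsequences of the two towers. The Borel property for surfaces promotes each such homotopy equivalence to a homeomorphism, and the classification of closed orientable surfaces by genus then gives $m_1 \cdots m_{k_j} = n_1 \cdots n_{k_j'}$ for matched cofinal indices $k_j, k_j'$. Taking $p$-adic valuations and passing to the limit in $j$ yields $C_{\vec{m}}(p) = C_{\vec{n}}(p)$ for every prime $p$.

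For (3) it suffices to exhibit sequences $\vec{m}, \vec{n}$ with $\vec{m} \, \mor \, \vec{n}$ but $C_{\vec{m}} \neq C_{\vec{n}}$, and then invoke (2); for instance $\vec{m} = (2, 3, 3, 3, \ldots)$ and $\vec{n} = (4, 3, 3, 3, \ldots)$ have $C_{\vec{m}}(2) = 1 \neq 2 = C_{\vec{n}}(2)$, both finite, while agreeing on all other primes, so $\vec{m} \, \mor \, \vec{n}$ yet $C_{\vec{m}} \neq C_{\vec{n}}$. The main technical obstacle lies in the forward direction of (2): one must extract from the abstract shape equivalence a matching of cofinal subsequences of the two concrete towers $\{M^{(m)}_\ell\}$ and $\{M^{(n)}_\ell\}$ along which corresponding terms are homotopy equivalent, and then ensure that the two-dimensional Borel property upgrades this to homeomorphisms preserving the integer invariant (genus) at each level, so that the passage to $p$-adic valuations recovers the full function $C_{\vec{m}}$ rather than a weaker equivalence class.
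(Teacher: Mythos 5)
Your treatments of (1), (3), and the backward direction of (2) are sound. In fact, your argument for (1) via leaf topology --- every leaf of $\fM(\Sigma_g,\vec{m})$ is the infinite cyclic cover $\widetilde{\Sigma_g}/\ker(h_0)$ because the odometer acts freely, so the leaves of $\fM(\mT^2,\vec{n})$ are cylinders while those of $\fM(\Sigma_{g_1},\vec{m})$ with $g_1>1$ have infinitely generated free fundamental group --- is a genuinely different and more elementary route than the paper's, which deduces (1) from the same Euler characteristic bookkeeping it uses for (2). Your example in (3) works just as the paper's does, given (2).

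The genuine gap is exactly where you flag it: the forward direction of (2). The Morita level-morphism machinery applied to a homeomorphism gives, for cofinal indices, homomorphisms $\pi_1(M^{(n)}_{j_s})\to\pi_1(M^{(m)}_{i_s})$ and diagonals $\pi_1(M^{(m)}_{i_s})\to\pi_1(M^{(n)}_{j_\ell})$ whose \emph{composites} are the bonding monomorphisms; it does not produce isomorphisms between matched terms, so the matched surfaces are not homotopy equivalent and there is no genus equality $m_1\cdots m_{k_j}=n_1\cdots n_{k_j'}$ to extract. Invoking the Borel property here is also misdirected: the issue is not promoting a homotopy equivalence to a homeomorphism, but that no homotopy equivalence between individual levels is available in the first place. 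The paper closes this gap differently, using the theorem of Rogers and Tollefson \cite{RT1971,RT1972}: any homeomorphism of weak solenoidal spaces is homotopic to one induced by a commutative ladder of \emph{covering maps} $\wtcH_k\colon X_{\ell_k}\to Y_k$ between cofinal subsequences of the two towers. Multiplicativity of the Euler characteristic then gives $\chi(X_{\ell_k})=d_k\cdot\chi(Y_k)$ with $d_k\geq 1$ an integer, i.e.\ $n_1\cdots n_k$ divides $m_1\cdots m_{\ell_k}$ when $g_1=g_2>1$, and comparing $p$-adic valuations as $k\to\infty$ (together with the symmetric ladder for $\cH^{-1}$) forces $C_{\vec{m}}=C_{\vec{n}}$. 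You could repair your argument either by citing Rogers--Tollefson as the paper does, or by squeezing divisibility relations directly out of the injections in the Morita ladder (using that surface groups of genus $\geq 2$ have nonzero Euler characteristic, multiplicative over finite-index subgroups), but as written the step from shape equivalence to level-wise homotopy equivalence does not go through.
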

\proof
First, recall that the Euler characteristic of the closed surface $\Sigma_g$ of genus $g \geq 1$  has Euler characteristic $\chi(\Sigma_g) = 2-2g$, and the Euler characteristic is multiplicative for coverings. That is, if $\Sigma_g'$ is a $k$-fold covering of $\Sigma_g$ then $\chi(\Sigma_g') = k \cdot \chi(\Sigma_g)$. 
In particular, for $g > 1$,    a proper covering $\Sigma_g'$ of $\Sigma_g$   is never homeomorphic to $\Sigma_g$. 

Next, each of the spaces $\fM_1$ and $\fM_2$ is homeomorphic to an inverse limit as in \eqref{eq-presentationinvlim}, 
\begin{eqnarray}
\fM_1 = \fM(\Sigma_{g_1} , a_1 , \vec{m}) & \cong &  \lim_{\longleftarrow} ~ \{ f_{\ell +1} \colon M_{\ell +1} \to M_{\ell}\} \  \label{eq-presentationinvlim1} \\
\fM_2 = \fM(\Sigma_{g_2} , a_2 , \vec{n}) & \cong &  \lim_{\longleftarrow} ~ \{ g_{\ell +1} \colon N_{\ell +1} \to N_{\ell}\} \  , \label{eq-presentationinvlim2}  
\end{eqnarray}
where $M_0 = \Sigma_{g_1}$ and $N_0 = \Sigma_{g_2}$. For $\ell > 0$, let $m_{\ell}$ denote the degree of the covering map $ f_{\ell}$ and   let $n_{\ell}$  denote the degree of the covering map $g_{\ell}$. 

 Now assume  there is a homeomorphism $H \colon \fM_1 \to \fM_2$.
 By the results  of Rogers and Tollefson in \cite{RT1971,RT1972}, the map $H$ is   homotopic to a homeomorphism $\whH$ which is induced by a map between the inverse limit representations  of  $\fM_1$ in  \eqref{eq-presentationinvlim1}  and   of  $\fM_2$ in \eqref{eq-presentationinvlim2}. Such a map has the following form: 
  
 There exists  an increasing integer-valued function $k \to \ell_k$ for $k \geq 0$, and continuous onto maps $\wtH_{k} \colon  M_{\ell_k}    \to N_k$ where   
the collection of maps $\{\wtH_{k} \mid k \geq k_0\}$ form a commutative diagram:
\begin{equation}\label{eq-commutingdiagram}
\xymatrix{
M_{\ell_0} \ar[d]_{\wtH_0}
& M_{\ell_1} \ar[l]_{f^{\ell_1}_{\ell_0}}\ar[d]^{\wtH_1}
& \cdots\ar[l]
& M_{\ell_k}\ar[d]^{\wtH_k}\ar[l]
& M_{\ell_{k+1}} \ar[l]_{f^{\ell_{k+1}}_{\ell_{k}}}\ar[d]^{\wtH_{k+1}}
& \cdots\ar[l]\\
N_0
& N_1\ar[l]^{g_1}
&\cdots\ar[l]
& N_k\ar[l]
& N_{k+1} \ar[l]^{g_k}
& \cdots\ar[l]}
\end{equation}
where the $f_k$ and $g_k$ are the bonding   maps in the inverse limit representations \eqref{eq-presentationinvlim1} and \eqref{eq-presentationinvlim2},  and $f^{\ell_{k+1}}_{\ell_k} = f_{\ell_k +1} \circ \cdots \circ f_{\ell_{k+1}}$ denotes the   composition of  bonding maps.  

All of the horizontal maps in the diagram  \eqref{eq-commutingdiagram} are covering maps by construction. Moreover, as the spaces $M_k$ and $N_k$ are closed surfaces, we can assume that all of the vertical maps in      \eqref{eq-commutingdiagram} are also covering maps.
Thus, the Euler classes of all surfaces there are related by the covering degrees of the maps. 
For example, $\chi(M_{\ell_k}) = d_k \cdot \chi(N_k)$  where $d_k$ is the covering degree of $\wtH_k$. 

To show 1) we assume that a homeomorphism $H$ exists, and so we have diagram  \eqref{eq-commutingdiagram}  as above. Observe that $g_2 = 1$ implies that $\chi(\Sigma_2) = \chi(\mT^2) = 0$, hence  $\chi(N_k) = 0$ for all  $k \geq 0$. Then as $d_k \geq 1$ for all $k$, 
we obtain  $\chi(M_{\ell_k}) = 0$. But this contradicts the assumption that $g_1 > 1$ hence $\chi(M_{\ell_k}) < 0$ as $M_{\ell_k}$ is a covering of $\Sigma_1$ which has $\chi(\Sigma_1) < 0$. Thus $\fM_1 \not\approx \fM_2$. 

 To show  2) first assume that   $C_{\vec{m}}(p)=C_{\vec{n}}(p)$ for all primes $p$. Then the odometer actions
  $\Phi_{\vec{m}} \colon \mZ \times  \fG_{\vec{m}} \to \fG_{\vec{m}}$ and  $\Phi_{\vec{n}} \colon \mZ \times  \fG_{\vec{n}} \to \fG_{\vec{n}}$ are conjugate by an automorphism $\theta \colon \fG_{\vec{m}} \to \fG_{\vec{n}}$. Then by Proposition~\ref{prop-inducedhomeos}, the suspension spaces 
 $\fM(\Sigma_{g_1} , a_1 , \vec{m})$ and $\fM_2 = \fM(\Sigma_{g_1} , a_1 , \vec{n})$ are homeomorphic. 
  
  To show the converse in 2) assume that a homeomorphism $H$ exists, and suppose   that for some prime $p$ we have $C_{\vec{m}}(p) \ne C_{\vec{n}}(p)$.
  We assume without loss of generality that $C_{\vec{m}}(p) < C_{\vec{n}}(p)$. If otherwise, then reverse the roles of $\vec{m}$ and $\vec{n}$ and consider the homeomorphism $H^{-1}$.  
  Then as $\chi(\Sigma_1) = \chi(\Sigma_2)$, for sufficiently large $k$ the prime factorization of the Euler characteristic $\chi(M_{\ell_k})$ contains a lower power of $p$ than the prime factorization of $\chi(N_k)$.  
 But this  contradicts the fact that $\chi(M_{\ell_k}) =d_k \cdot \chi(N_k)$ where $d_k$ is the covering degree of $\wtH_k$. 

 Finally, to show 3) let $\Sigma = \Sigma_{g_1} = \Sigma_{g_2}$ where $g = g_1 = g_2 > 1$.
It suffices to chose    $\vec{m}$, $\vec{n}$ such that $\vec{m} \, \mor \, \vec{n}$, but   $C_{\vec{m}} \ne C_{\vec{n}}$. It then follows from 2) that $\fM_1 \not\approx \fM_2$. Pick a prime $p_1 \geq 3$ and let $\vec{m}$ be any sequence such that $C_{\vec{m}}(p_1) = 0$. Then define $\vec{n}$ by setting
$n_1 = p_1$ and  $n_{k+1} = m_k$ for all $k \geq 1$. 
 
 Note that $C_{\vec{m}}(p_1) = 0 \ne 1 = C_{\vec{n}}(p_1)$,  so $C_{\vec{m}}(p) \ne C_{\vec{n}}(p)$ is satisfied. But clearly $\vec{m} \, \mor \, \vec{n}$, so the \emph{adic}-surfaces $\fM(\Sigma_{g} , a_1, \vec{m})$ and $\fM(\Sigma_{g} , a_1, \vec{n})$ are return equivalent by Proposition~\ref{prop-surfadicMor}, but are not homeomorphic by  part 2) above. 
\endproof

\subsection{$\vec{m}$-adic solenoids of   higher dimension} \label{subsec-higher}

 Observe that the   requirements on the base manifold $\Sigma$ used in the proofs of 2) and 3) of  Theorem~\ref{thm-surfaces} are that:
 \begin{enumerate}
\item $\Sigma$ is a strongly Borel manifold, so that the maps $\wtH_{k}$ can be assumed to be coverings;
\item the fundamental group $G_0 = \pi_1(\Sigma, x)$ admits an epimorphism onto $\mZ$, or equivalently that $H^1(\Sigma ; \mZ)$ contains a copy of $\mZ$;
\item the Euler characteristic $\chi(\Sigma) \ne 0$.
\end{enumerate}
Thus, the proof of parts 2) and 3) of Theorem~\ref{thm-surfaces}  can be applied almost verbatim to show:
\begin{thm}\label{thm-main4}
Let $M$ be a closed manifold of dimension $n \geq 3$. Assume that $M$ is strongly Borel, that $H^1(M ; \mZ)$ has rank at least $1$, and the Euler characteristic $\chi(M) \ne 0$. 
Let $\fM_1 = \fM(M , a , \vec{m})$ and $\fM_2 = \fM(M , a , \vec{n})$ be the corresponding \emph{adic}-solenoids, where $a \colon \pi_1(M,x) \to \mZ$ is an epimorphism. Then we have:
\begin{enumerate}
\item   $\fM_1$ and $\fM_2$ are   homeomorphic if and only if   $C_{\vec{m}}=C_{\vec{n}}$.
\item there exists $\vec{m} \mor \vec{n}$   but  $\fM_1 \not\approx \fM_2$. 
\end{enumerate}
\end{thm}
 
 Finally, we comment on the requirement in   Theorem~\ref{thm-main3} that the base manifolds be strongly Borel. Let $M$ be a closed $n$-manifold where $n \geq 5$. Suppose that $M$ satisfies the conditions of Theorem~\ref{thm-main4}. 
 
 Let $N = M \# \mS^2 \times \mS^{n-2}$ be the closed $n$-manifold obtained by attaching the handle $\mS^2 \times \mS^{n-2}$. Then $\pi_1(M, x) \cong \pi_1(N,x)$ where we choose the basepoint   $x \in M$ disjoint from the disk along which the handle is attached. 
 
 Form the \emph{adic}-solenoids $\fM_1 = \fM(M , a , \vec{m})$ and $\fM_2 = \fM(N , a , \vec{m})$ as before, but with bases $M$ and $N$. Then $\fM_1$ and $\fM_2$ are return equivalent, as in fact they have conjugate global monodromy actions. On the other hand, all leaves in $\fM_1$ have trivial higher homotopy groups, while all leaves in $\fM_2$ have non-trivial higher homotopy groups. Thus, $\fM_1$ and $\fM_2$ can not be homeomorphic.



\begin{thebibliography}{10}

\bibitem{AartsFokkink1991}
{J.M.~Aarts and R.J.~Fokkink},
\newblock {\it The classification of solenoids},
\newblock {\bf Proc. Amer. Math. Soc.}, 111 :1161-1163, 1991.

\bibitem{AO1995}
{J.~Aarts and L.~Oversteegen},
\newblock {\it Matchbox manifolds},
\newblock In {\bf Continua ({C}incinnati, {OH}, 1994)},
\newblock {Lecture Notes in Pure and Appl. Math., Vol. 170},
\newblock {Dekker, New York}, 1995, pages 3--14..
 
\bibitem{AP1998}
{J.~Anderson and I.~Putnam},
\newblock {\it Topological invariants for substitution tilings and their associated {$C\sp *$}-algebras},
\newblock {\bf Ergodic Theory Dyn. Syst.}, 18:509--537, 1998.

\bibitem{BS2007}
{M.~Barge and R.~Swanson},
\newblock {\it New techniques for classifying {W}illiams solenoids},
\newblock {\bf Tokyo J. Math.}, 30:139--157, 2007.
\newblock{\bf Erratum} Tokyo J. Math. 34:287--288, 2011. 

\bibitem{BargeSadun2011}
{M.~Barge and R.~Swanson},
\newblock {\it Quotient cohomology for tiling spaces},
\newblock {\bf New York J. Math.}, 17:579--599, 2011.

\bibitem{Bing1960}
{R.H.~Bing},
\newblock {\it A simple closed curve is the only homogeneous bounded plane continuum that contains an arc},
\newblock {\bf Canad. J. Math.}, 12:209--230, 1960.

\bibitem{BlockKeesling2004}
{L.~Block and J.~Keesling},
\newblock {\it A characterization of adding machine maps},
\newblock {\bf Topology Appl.}, 140:151--161, 2004.

\bibitem{CN1985}
{C.~Camacho and A.~Lins~Neto},
\newblock {\bf Geometric Theory of Foliations},
\newblock {Translated from the Portuguese by Sue E. Goodman},
\newblock {Progress in Mathematics}, {Birkh\"auser Boston, MA}, 1985.

\bibitem{CandelConlon2000}
{A.~Candel and L.~Conlon},
\newblock {\bf Foliations I},
\newblock Amer. Math. Soc., Providence, RI, 2000.

\bibitem{ClarkHurder2011a}
{A.~Clark and S.~Hurder},
\newblock {\it Embedding solenoids in foliations},
\newblock {\bf Topology Appl.}, 158:1249--1270, 2011.

\bibitem{ClarkHurder2013}
{A.~Clark and S.~Hurder},
\newblock {\it Homogeneous matchbox manifolds},
\newblock {\bf Trans. Amer. Math. Soc.}, 365:3151--3191, 2013.

\bibitem{CHL2013c}
{A.~Clark, S.~Hurder and O.~Lukina},
\newblock {\it Classifying matchbox manifolds},
\newblock {preprint}, August 2013, {arXiv:1311.0226}.

\bibitem{CHL2014}
{A.~Clark, S.~Hurder and O.~Lukina},
\newblock {\it Shape of matchbox manifolds},
\newblock {\bf Indag. Math.}, 25(4):669--712, 2014.

\bibitem{CHL2017a}
{A.~Clark, S.~Hurder and O.~Lukina},
\newblock {\it Manifold-like matchbox manifolds},
\newblock {preprint}, 2017; {arXiv:1704.04402}.

\bibitem{ClarkHunton2016}
{A.~Clark  and J.~Hunton},
\newblock {\it The homology core and invariant measures},
\newblock {\it Transactions A.M.S.}, 2017, {\text DOI: https://doi.org/10.1090/tran/7398}.
 
 
\bibitem{CortezPetite2008}
{M.-I.~Cortez and S.~Petite},
\newblock {\it $G$-odometers and their almost one-to-one extensions},
\newblock {\bf J. London Math. Soc.}, 78(2):1--20, 2008. 

\bibitem{CortezMedynets2016}
{M.I.~Cortez and K.~Medynets},
\newblock {\it Orbit equivalence rigidity of equicontinuous systems}
\newblock {\bf  J. Lond. Math. Soc. (2)}, 94:545--556, 2016.
  
\bibitem{Davis2012}
{J.F.~Davis},
\newblock {\it The work of Tom Farrell and Lowell Jones in topology and geometry},
\newblock {\bf  Pure Appl. Math. Q.}, 8:1--14, 2012.

\bibitem{DHL2016a}
{J.~Dyer, S.~Hurder and O.~Lukina},
\newblock {\it The discriminant invariant of Cantor group actions},
\newblock {\bf Topology Appl.}, 208: 64-92, 2016.


\bibitem{DHL2016b}
{J.~Dyer, S.~Hurder and O.~Lukina},
\newblock {\it Growth and homogeneity of matchbox manifolds},
\newblock {\bf Indagationes Math.}, 28:145--169, 2017.

\bibitem{DHL2016c}
{J.~Dyer, S.~Hurder and O.~Lukina},
\newblock {\it Molino theory for matchbox manifolds},
\newblock {\bf Pacific J. Math.}, 289:91-151, 2017.

 \bibitem{EMT1977}
{D.B.A.}~Epstein, {K.C.}~Millet, and {D.}~Tischler,
\newblock {\it Leaves without holonomy},
\newblock {\bf Jour. London Math. Soc.}, 16:548--552, 1977.
 
\bibitem{FO2002}
{R.~Fokkink and L.~Oversteegen},
\newblock {\it Homogeneous weak solenoids},
\newblock {\bf Trans. Amer. Math. Soc.}, 354(9):3743--3755, 2002.

\bibitem{FHK2002}
{A.~Forrest, J.~Hunton and J.~Kellendonk},
\newblock {\bf Topological invariants for projection method patterns},
\newblock {Mem. Amer. Math. Soc.}, Vol. 159, 2002.

\bibitem{Ghys1999}
{\'{E}~Ghys},
\newblock {\it Laminations par surfaces de Riemann},
\newblock in {\bf Dynamique et G\'{e}om\'{e}trie Complexes},
\newblock {Panoramas \& Synth\`{e}ses}, 8:49--95, 1999.

\bibitem{GPS2017}
{T.~Giordano, I.~Putnam and C.~Skau},
\newblock {\it {$\mathbb Z^d$}-odometers and cohomology},
\newblock {preprint}, {arXiv:1709.08585v1}.  
    
\bibitem{Haefliger1984}
{A.~Haefliger},
\newblock {\it Groupo{\"\i}des d'holonomie et classifiants},
\newblock In {\bf Transversal structure of foliations (Toulouse, 1982)},
\newblock {Asterisque, 177-178, Soci\'et\'e Math\'ematique de France}, 1984:70--97.

\bibitem{Haefliger2002a}
{A.~Haefliger},
\newblock {\it Foliations and compactly generated pseudogroups}, 
\newblock in {\bf Foliations: geometry and dynamics (Warsaw, 2000)},
\newblock {World Sci. Publ., River Edge, NJ},   2002:275--295.

\bibitem{Hurder2008}
{S.~Hurder},
\newblock {\it Classifying foliations},
\newblock {\bf Foliations, Geometry and Topology. Paul Schweitzer Festschrift},
\newblock (eds. Nicoalu Saldanha et al),
\newblock Contemp. Math. Vol. 498, American Math. Soc., Providence, RI, 2009,  pages 1--61.

\bibitem{Hurder2014}
{S.~Hurder},
\newblock {\it Lectures on Foliation Dynamics},
\newblock {\bf Foliations: Dynamics, Geometry and Topology}, pages 87--149,
\newblock {Advanced Courses in Mathematics CRM Barcelona}, Springer Basel,   2014.

\bibitem{HL2017a}
{S.~Hurder and O.~Lukina},
\newblock {\it Wild solenoids},
\newblock {\it Transactions A.M.S.}, 2017, {\text DOI: https://doi.org/10.1090/tran/7339}.

  
\bibitem{JS2015}
{A.~Julien and L.~Sadun},
\newblock {\it Tiling deformations, cohomology, and orbit equivalence of tiling spaces}, {arXiv:1506.02694}. 
      
\bibitem{Luck2012}
{W. L\"{u}ck},
\newblock {\it Aspherical manifolds},
\newblock {\bf Bulletin of the Manifold Atlas}, 1--17, 2012.
\verb"http://131.220.77.52/lueck/publications.php#survey"
  
\bibitem{LM1997}
{M.~Lyubich and Y.~Minsky},
\newblock {\it Laminations in holomorphic dynamics},
\newblock {\bf J. Differential Geom.}, 47:17--94, 1997.

\bibitem{McCord1965}
{M.C.~McCord},
\newblock {\it Inverse limit sequences with covering maps},
\newblock {\bf Trans. Amer. Math. Soc.}, 114:197--209, 1965

\bibitem{MS2006}
{C.C.~Moore and C.~Schochet},
\newblock {\bf Analysis on Foliated Spaces},
\newblock Math. Sci. Res. Inst. Publ. vol. 9, Second Edition,
\newblock Cambridge University Press, New York, 2006.

\bibitem{Plante1975}
{J.~Plante},
\newblock {\it Foliations with measure-preserving holonomy},
\newblock {\bf Ann. of Math.}, 102:327--361, 1975.


\bibitem{RT1971}
{J. T. Rogers, Jr. and J. L. Tollefson},
\newblock{\it Maps between weak solenoidal spaces},
\newblock{\bf Coll. Math.} 23(2):245--249, 1971.

\bibitem{RT1971b} 
{J.T.~Rogers, Jr. and J.L.~Tollefson}, 
\newblock {\it Homogeneous inverse limit spaces with non-regular covering maps as bonding maps},
\newblock {\bf Proc. A.M.S} {\bf 29}: 417--420, 1971.

\bibitem{RT1972} 
{J.T.~Rogers, Jr. and J.L.~Tollefson}, 
\newblock {\it Involutions on solenoidal spaces}, 
\newblock {\bf Fund. Math.} {\bf 73} (1971/72), no. 1, 11--19.

\bibitem{Sadun2003}
{L.~Sadun}
\newblock {\it Tiling spaces are inverse limits},
\newblock {\bf J. Math. Phys.}, 44:5410--5414, 2003.

\bibitem{Sadun2008}
{L.~Sadun}
\newblock {\bf Topology of tiling spaces},
\newblock {University Lecture Series}, Vol. 46, American Math. Society, 2008.

\bibitem{Schori1966}
{R.~Schori},
\newblock {\it Inverse limits and homogeneity},
\newblock {\bf Trans. Amer. Math. Soc.}, 124:533--539, 1966.
    
\bibitem{Sullivan1988}
{D.~Sullivan},
\newblock{Bounds, quadratic differentials, and renormalization conjectures},
\newblock{American {M}athematical {S}ociety centennial publications, {V}ol. {II} ({P}rovidence, {RI}, 1988)},   {pages 417--466}, {1992}. 

\bibitem{Sullivan2014}
{D.~Sullivan},
\newblock{Solenoidal manifolds},
 \newblock {\bf J. Singul.}, 9:203--205, 2014.

  
\bibitem{vanDantzig1930}
{D.~van Dantzig},
\newblock {\it  \"{U}ber topologisch homogene {K}ontinua},
\newblock {\bf Fund. Math.}, 15:102--125, 1930.

\bibitem{Verjovsky2014}
{A.~Verjovsky},
\newblock{Commentaries on the paper {\it {S}olenoidal manifolds} by  {D}ennis {S}ullivan},
 \newblock {\bf J. Singul.}, 9:245--251, 2014.

\bibitem{Vietoris1927}
{L.~ Vietoris},
\newblock {\it  \"{U}ber den h\"oheren {Z}usammenhang kompakter {R}\"aume und eine {K}lasse von zusammenhangstreuen {A}bbildungen},
\newblock {\bf Math. Ann.}, 97:454--472, 1927.



\end{thebibliography}
\end{document}